\documentclass[12pt]{amsart}

\author{Simeon Ball and Michel Lavrauw} \thanks{2010 {\it Mathematics Subject Classification.} 51E21, 94B05, 05B25. \\
The first author acknowledges the support of the project MTM2014-54745-P of the Spanish {\em Ministerio de Econom\'ia y Competitividad.}}

\title{Planar arcs}

 \usepackage{amsmath}
\usepackage{amssymb}
\usepackage{amsthm}
\usepackage{graphicx}
\usepackage{amscd}

 \textwidth 160mm
 \textheight 210mm
 \topmargin 0cm
 \oddsidemargin 0cm
 \evensidemargin 0cm
 \parskip 2mm
  \setlength{\parindent}{0pt}

\newtheorem{theorem}{Theorem}
\newtheorem{lemma}[theorem]{Lemma}

 \newtheorem{corollary}[theorem]{Corollary}
 
 \newtheorem{conjecture}[theorem]{Conjecture}
\newtheorem{example}{Example}

 % Numbering equations
 %\makeatletter
 %\@addtoreset{equation}{section}
 %\makeatother
 %\renewcommand{\theequation}{\arabic{section}.\arabic{equation}} 

 % Numbering tables
% \makeatletter
% \@addtoreset{table}{section}
% \makeatother
 %\renewcommand{\thetable}{\arabic{section}.\arabic{table}}

\begin{document}

\baselineskip=17pt

\date{\today}

\maketitle

\begin{abstract}
Let $p$ denote the characteristic of ${\mathbb F}_q$, the finite field with $q$ elements. 
We prove that if $q$ is odd then an arc of size $q+2-t$ in the projective plane over ${\mathbb F}_q$, which is not contained in a conic, is contained in the intersection of two curves, which do not share a common component, and have degree at most $t+p^{\lfloor \log_p t \rfloor}$, provided a certain technical condition on $t$ is satisfied. 

This implies that if $q$ is odd then an arc of size at least $q-\sqrt{q}+\sqrt{q}/p+3$ is contained in a conic if $q$ is square and an arc of size at least $q-\sqrt{q}+\frac{7}{2}$ is contained in a conic if $q$ is prime. This is of particular interest in the case that $q$ is an odd square, since then there are examples of arcs, not contained in a conic, of size $q-\sqrt{q}+1$, and it has long been conjectured that if $q \neq 9$ is an odd square then any larger arc is contained in a conic. 

These bounds improve on previously known bounds when $q$ is an odd square and for primes less than $1783$. The previously known bounds, obtained by Segre \cite{Segre1967}, Hirschfeld and Korchm\'aros \cite{HK1996} \cite{HK1998}, and Voloch \cite{Voloch1990b} \cite{Voloch1991}, rely on results on the number of points on algebraic curves over finite fields, in particular the Hasse-Weil theorem and the St\"ohr-Voloch theorem, and are based on Segre's idea to associate an algebraic curve in the dual plane containing the tangents to an arc. In this paper we do not rely on such theorems, but use a new approach starting from a scaled coordinate-free version of Segre's lemma of tangents.

Arcs in the projective plane over ${\mathbb F}_q$ of size $q$ and $q+1$, $q$ odd, were classified by Segre \cite{Segre1955b} in 1955. In this article, we complete the classification of arcs of size $q-1$ and $q-2$. 
%More generally, we prove that associated to an arc $S$ of size $q+2-t$ in the projective plane over ${\mathbb F}_q$, there is a polynomial $F(X,Y)$, homogeneous of degree $t$ in both $X=(X_1,X_2,X_3)$ and $Y=(Y_1,Y_2,Y_3)$, with the property that evaluating $F$ in one of the variables at a point $a$ of the arc, one obtains the product of the $t$ linear forms whose kernels are the tangents to $S$ at $a$. Moreover, modulo the homogeneous polynomials of degree $t$ which are zero on $S$, the polynomial $F$ is unique. 
\end{abstract}

\section{Introduction.}

%Let $\mathrm{V}_k({\mathbb F}_q)$ denote the $k$-dimensional vector space over ç

Let ${\mathbb F}_q$ denote the finite field with $q$ elements and let $p$ denote the characteristic of ${\mathbb F}_q$. 

Let $\mathrm{PG}(k-1,q)$ denote the $(k-1)$-dimensional projective space over ${\mathbb F}_q$.

An {\em arc} of $\mathrm{PG}(k-1,q)$ is a set of points, any $k$ of which span the whole space. An arc is {\em complete} if it cannot be extended to a larger arc. In this article we will be interested in arcs in $\mathrm{PG}(2,q)$, which we call {\em planar arcs}. A planar arc is defined equivalently as a set of points, no three of which are collinear. One can project any higher dimensional arc $A$ to a planar arc of size $|A|-k+3$, by choosing any $k-3$ points of the arc and quotienting by the subspace that they span. Therefore, the results contained in this article have implications for all low-dimensional arcs.

Arcs not only play an important role in finite geometry but also forge links to other branches of mathematics. The matrix whose columns are vector representatives of the points of an arc, generates a linear maximum distance separable code, see \cite[Chapter 11]{MS1977} for more details on this. Other areas in which planar arcs play a role include the representation of matroids, see \cite{Oxley1992}, Del Pezzo surfaces over finite fields, see \cite{BFL2016}, bent functions, see \cite[Chapter 7]{Mesnager2016} and pro-solvable groups, see \cite{CL2012}.

In 1955, Beniamino Segre published the article \cite{Segre1955a}, which contains his now celebrated theorem that if $q$ is odd then a planar arc of size $q+1$ is a conic. He went further in his 1967 paper \cite{Segre1967} and considered planar arcs of size $q+2-t$ and proved that the set of tangents, when viewed as a set of points in the dual plane, is contained in a curve of small degree $d$. Specifically, if $q$ is even then $d=t$ and if $q$ is odd then $d=2t$.

His starting point, which will also be our starting point, was his lemma of tangents. Lemma~\ref{segre} is a simplification of the coordinate-free version of Segre's original lemma of tangents which first appeared in \cite{Ball2012}.

Here, we do not apply Segre's lemma of tangents in the dual setting, nor combine it with interpolation, as was the approach in \cite{Ball2012}. Our aim here is to prove Theorem~\ref{main}, which maintains that there is a polynomial $F(X,Y)$, where $X=(X_1,X_2,X_3)$ and $Y=(Y_1,Y_2,Y_3)$, homogeneous of degree $t$ in both $X$ and $Y$, which upon evaluation in one of the variables at a point $a$ of the arc or at least a large subset of the arc, factorises into $t$ linear forms whose kernels are precisely the tangents to the arc at $a$.

This allows us to prove the following theorem.

\begin{theorem} \label{twocurves}
Let $A$ be a planar arc of size $q+2-t$, $q$ odd, not contained in a conic. 

If $A$ is not contained in a curve of degree $t$ then it is contained in the intersection of two curves of degree at most $t+p^{\lfloor \log_p t \rfloor}$ which do not share a common component. 

If $A$ is contained in a curve $\phi$ of degree $t$ and
\begin{equation} \label{cond}
p^{\lfloor \log_p t \rfloor}(t+\tfrac{1}{2}p^{\lfloor \log_p t \rfloor}+\tfrac{3}{2}) \leqslant \tfrac{1}{2}(t+2)(t+1)
\end{equation}
then there is another curve of degree at most $t+p^{\lfloor \log_p t \rfloor}$ which contains $A$ and shares no common component with $\phi$.
\end{theorem}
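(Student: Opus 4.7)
The plan starts from the bihomogeneous polynomial $F(X,Y)$ supplied by Theorem~\ref{main}. For every $a$ in a large subset $A'\subseteq A$, the specialisation $F(X,a)$ factors as the product of the $t$ tangent forms at $a$; since each tangent passes through $a$, we have $F(a,a)=0$. Hence the diagonal restriction $f(X):=F(X,X)$ is a ternary form of degree $2t$ vanishing on $A'$, and after a controlled correction handling the (small) exceptional set $A\setminus A'$, on all of $A$. This already furnishes one curve through $A$, but of degree $2t$ rather than the desired $t+s$ with $s:=p^{\lfloor\log_p t\rfloor}$.

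The main work is to sharpen the degree from $2t$ down to $t+s$. The structural fact I would exploit is that $F(X,a)$ has $a$ as a zero of order $t$, so $f$ agrees to high order with $F(X,a)$ at each $a\in A'$, which severely constrains its monomial support. Together with the Frobenius identities in $\mathbb{F}_q$, this constraint should produce a factorisation (or a short sum of such) of the shape $f=\theta\cdot \eta^{p^{\lfloor\log_p t\rfloor}}$, where $\eta$ has degree $1$, so that $\eta^{p^{\lfloor\log_p t\rfloor}}$ contributes degree $t-s$ and $\theta$ of degree $t+s$ still vanishes on $A$. The Frobenius exponent $p^{\lfloor\log_p t\rfloor}$ is forced by the arithmetic $2t-(t-s)=t+s$.

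For Case~1, where $A$ lies on no curve of degree $t$, I would perform the extraction in two linearly independent ways, e.g.\ by varying the normalisation freedom intrinsic to the construction of $F$ via the coordinate-free Segre lemma of tangents (Lemma~\ref{segre}), to obtain forms $\theta_1,\theta_2$ of degree at most $t+s$ each vanishing on $A$. Any common component $\chi$ would contain $A$; a Bezout comparison with a second curve it spans either contradicts $|A|=q+2-t$ when $\deg\chi$ is small or forces $\deg\chi\le t$ and contradicts the hypothesis when $\deg\chi$ is close to $t+s$.

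In Case~2 the same extraction is carried out in the quotient ring ${\mathbb F}_q[X_1,X_2,X_3]/(\phi)$. The space of degree-$(t+s)$ forms that vanish on $A$ has codimension at most $|A|$ in the $\binom{t+s+2}{2}$-dimensional ambient, while multiples of $\phi$ contribute a subspace of dimension $\binom{s+2}{2}$, and the Frobenius extraction imposes roughly $\binom{t+2}{2}$ further linear conditions; condition~\eqref{cond} is exactly what is needed to make the remaining dimension positive, equivalently $(t-s+1)(t-s+2)\ge 2s^2$. The principal obstacle throughout is the Frobenius step: proving that $f(X)=F(X,X)$ admits a genuine factor which is a $p^{\lfloor\log_p t\rfloor}$-th power, using only the tangent-form factorisation of $F(X,a)$ and the multiplicative relations on products of tangent forms coming from the scaled lemma of tangents.
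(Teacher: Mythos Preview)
Your proposal diverges from the paper's argument at the first substantive step, and the divergence is where the gap lies. You form the diagonal $f(X)=F(X,X)$ of degree $2t$ and then hope to peel off a factor of the shape $\eta^{\,p^{\lfloor\log_p t\rfloor}}$ with $\eta$ linear. You acknowledge this ``Frobenius extraction'' as the principal obstacle, but you give no mechanism for it, and there is none: nothing in the tangent factorisation $F(X,a)=f_a(X)$ or in Lemma~\ref{segre} forces $F(X,X)$ to carry a $p$-power of a \emph{single} linear form as a factor. Different points $a$ contribute different tangent pencils, and the diagonal sees all of them at once; there is no distinguished linear $\eta$. The dimension count you sketch for Case~2 also does not match condition~(\ref{cond}); the inequality $(t-s+1)(t-s+2)\ge 2s^2$ is not equivalent to (\ref{cond}).

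The paper does not look at $F(X,X)$ at all. Instead it expands $F(X+Y,Y)-F(X,Y)$ and, for each multi-index $v=(i,j,k)$ with $t-p^{\lfloor\log_p t\rfloor}\le i+j+k\le t-1$, takes the coefficient $\rho_v(Y)$ of $X_1^iX_2^jX_3^k$. Each $\rho_v$ is a form in $Y$ of degree $2t-(i+j+k)\le t+p^{\lfloor\log_p t\rfloor}$, and because $F(X,y)=f_y(X)=f_y(X+y)$ for $y$ in a suitable set $S$ (built from nested $(t{+}r)$-socles via Lemma~\ref{tplusrsocle}, which is where (\ref{cond}) enters), every $\rho_v$ vanishes on $S$ and hence on $A$. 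So the low-degree curves are obtained directly as Taylor coefficients, not by factoring a degree-$2t$ object. One then sets $\phi=\gcd\bigl(\{\rho_v\}\cup\Phi_t\bigr)$. If $\deg\phi=0$ the $\rho_v$'s already supply two coprime curves of degree at most $t+p^{\lfloor\log_p t\rfloor}$. If $\deg\phi>0$, a binomial identity (Lemma~\ref{tech}) combined with Lemma~\ref{segre} shows that on every bisecant of $S$ the form $\phi$ splits as $\alpha^i\beta^j$ with $\alpha,\beta$ vanishing at the two arc points; the paper calls such a $\phi$ \emph{hyperbolic on $S$}. A separate elementary lemma (Lemma~\ref{itisitis}) then proves that a hyperbolic form of degree at most $\tfrac12(|S|-2)$ forces $S$ into a conic, contradicting the hypothesis. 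The role of $p^{\lfloor\log_p t\rfloor}$ is thus not a Frobenius factorisation but the range of $|v|$ needed so that Lemma~\ref{tech} (via Lucas' theorem) yields the key relation $g_t+(-1)^{t+1}g_0=0$.
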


We do not believe that the condition (\ref{cond}) in Theorem~\ref{twocurves} is necessary. For the values of $t$ where (\ref{cond}) is not satisfied one can remove a few points of the arc so that it is satisfied and derive the same conclusion for a large subset of the arc. %Without imposing condition (\ref{cond}) we will prove the following theorem.

%\begin{theorem} \label{twocurvesweak}
%Let $A$ be a planar arc of size $q+2-t$, $q$ odd.  If $A$ is not contained in a curve of degree $t$ then there are two co-prime homogeneous polynomials of degree at most $t+p^{\lfloor \log_p t \rfloor}$ which are zero on $A$.
%\end{theorem}

It is worth remarking that Theorem~\ref{twocurves} may hold for $q$ even as well, if we replace the condition ``not contained in a conic" with ``not contained in an arc of maximum size". Arcs of maximum size for $q$ even are of size $q+2$ and are known as hyperovals. In general, these are not contained in low degree curves and large subsets of these hyperovals are also not contained in low degree curves. Indeed, almost the opposite occurs. In \cite{CS2015}, Caullery and Schmidt prove that if $S=\{(f(x),x,1) \ | \ x \in {\mathbb F}_q \} \cup \{(1,0,0),(0,1,0) \}$ is an arc of size $q+2$ and $f$ has degree less than $\frac{1}{2}q^{\frac{1}{4}}$, then $f(x)=x^6$ or $f(x)=x^{2^k}$ for some positive integer $k$. There are many other infinite families of hyperovals known, see \cite{Cherowitzo1996} for a survey.

In the following sections we will prove various results, which are corollaries to Theorem~{\ref{twocurves}. We will then go on to prove the aforementioned Lemma~\ref{segre}, Theorem~\ref{main} and finally Theorem~\ref{twocurves}.

\section{The second largest complete arc and the $\sqrt{q}$ conjecture.} \label{section:rootq}

In 1947, Bose \cite{Bose1947} was the first to observe that the largest planar arc has size $q+1$ if $q$ is odd and $q+2$ if $q$ is even. The size of the second largest complete arc remains an open question in general.

There are examples of complete arcs of size $q-\sqrt{q}+1$ in $\mathrm{PG}(2,q)$ when $q$ is square, first discovered by Kestenband, see \cite{Kestenband1981}. These arcs are the intersection of two Hermitian curves of degree $\sqrt{q}+1$,
$$
x_1^{\sqrt{q}+1}+x_2^{\sqrt{q}+1}+x_3^{\sqrt{q}+1}=0 \ \ \ \mathrm{and} \ \ \ \sum_{i,j=1}^3 x_i^{\sqrt{q}}x_j h_{ij}=0,
$$
where $h_{ij}=h_{ji}^{\sqrt{q}}$ and the  characteristic polynomial of the matrix $H=(h_{ij})$ is irreducible over ${\mathbb F}_q$. It was this construction that motivated us to try and prove that arcs, not contained in a conic, are contained in the intersection of curves of low degree.

The best bounds on the size of the second largest complete arc are the following.

For $q$ even Segre \cite{Segre1967} proved, combining Theorem~\ref{segrethm} with the Hasse-Weil theorem, that the second largest complete arc has size at most $q-\sqrt{q}+1$. The examples above imply that this bound is tight if $q$ is also a square.

For $q$ prime, Voloch \cite{Voloch1990b} proved, by using the St\"ohr-Voloch theorem from \cite{SV1986}, that the second largest complete arc has size at most $\frac{44}{45}q+\frac{8}{9}$.

For $q$ non-square, Voloch \cite{Voloch1991} proved the second largest complete arc has size at most $q-\frac{1}{4}\sqrt{pq} +\frac{29}{16}p-1$ if $q$ is odd and at most $q-\sqrt{2q} +2$ if $q$ is even. 

By, exploiting Theorem~\ref{segrethm} (Segre's theorem from \cite{Segre1967}) and bounds on the number of points on an algebraic curve, Hirschfeld and Korchm\'aros \cite{HK1996} proved that the second largest complete arc has size at most $q-\frac{1}{2} \sqrt{q}+5$, provided that the characteristic is at least $5$. This was subsequently improved to $q-\frac{1}{2} \sqrt{q}+3$ by Hirschfeld and Korchm\'aros \cite{HK1998}, provided that $q \geqslant 529$ and $q \neq 3^6, 5^5$.

Theorem~\ref{twocurves} leads directly to the following theorem, which improves on the previously known bounds.

\begin{theorem} \label{conicthmsquare}
An arc in ${\mathrm{PG}}(2,q)$,
 $q=p^{2h}$, $p\neq 2$, of size at least $q-\sqrt{q}+3+\sqrt{q}/p$ is contained in a conic. 
\end{theorem}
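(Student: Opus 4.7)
The plan is to derive Theorem~\ref{conicthmsquare} as a direct corollary of Theorem~\ref{twocurves} combined with B\'ezout's theorem. Suppose, for contradiction, that $A$ is not contained in a conic, and write $|A|=q+2-t$. If $t=1$ then $|A|=q+1$ and Segre's 1955 theorem already forces $A$ to be a conic, so we may assume $t\geqslant 2$. The hypothesis $|A|\geqslant q-\sqrt{q}+3+\sqrt{q}/p$ rearranges to
\[
t\;\leqslant\;\sqrt{q}-1-\frac{\sqrt{q}}{p}\;=\;\frac{(p-1)\sqrt{q}}{p}-1.
\]
Writing $q=p^{2h}$ so that $\sqrt{q}=p^{h}$, and setting $m:=p^{\lfloor\log_{p}t\rfloor}$, the bound $t<\sqrt{q}=p^{h}$ forces $\lfloor\log_{p}t\rfloor\leqslant h-1$ and hence $m\leqslant p^{h-1}=\sqrt{q}/p$. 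Adding these two bounds gives the crucial inequality
\[
t+m\;\leqslant\;\frac{(p-1)\sqrt{q}}{p}-1+\frac{\sqrt{q}}{p}\;=\;\sqrt{q}-1.
\]

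Next I would invoke Theorem~\ref{twocurves} to place $A$ inside the intersection of two plane curves of degree at most $t+m$ that share no common component. B\'ezout's theorem then yields
\[
|A|\;\leqslant\;(t+m)^{2}\;\leqslant\;(\sqrt{q}-1)^{2}\;=\;q-2\sqrt{q}+1,
\]
in contradiction with $|A|\geqslant q-\sqrt{q}+3+\sqrt{q}/p$, because $(q-\sqrt{q}+3+\sqrt{q}/p)-(q-2\sqrt{q}+1)=\sqrt{q}+2+\sqrt{q}/p>0$. This would complete the proof.

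The one genuine subtlety is the invocation of Theorem~\ref{twocurves} in the case that $A$ already lies on a curve of degree $t$: here the conclusion requires the technical hypothesis (\ref{cond}), which a short manipulation (substitute $u=t-m$) rewrites as $(t-m+1)(t-m+2)\geqslant 2m^{2}$. This inequality can fail when $t$ lies just above a power of $p$, particularly for small primes, and handling the failure is the main technical obstacle. Following the remark made after Theorem~\ref{twocurves}, in each such case I would instead work with $A\setminus S$ for a small set $S\subset A$ chosen so that the new value $t'=t+|S|$ satisfies (\ref{cond}); one then re-runs the B\'ezout argument for $A\setminus S$, using $t'+m'\leqslant\sqrt{q}-1$ (still valid after a small increase in $t$), and checks that the slack of $\sqrt{q}/p$ built into the hypothesis on $|A|$ is large enough to absorb the $|S|$ deleted points. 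Carrying out this bookkeeping uniformly in $p$ and $h$ is routine but must be done with care; this is the only place where the argument is not entirely transparent.
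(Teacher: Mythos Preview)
Your treatment of the first case (no curve of degree $t$ through $A$) matches the paper exactly. The gap is in the second case.

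Your plan there is to delete $|S|$ points so that (\ref{cond}) holds for $t'=t+|S|$ and then use the symmetric bound $|A\setminus S|\leqslant(t'+m')^{2}\leqslant(\sqrt{q}-1)^{2}$. For $p=3$ this cannot be made to work. Take $|A|$ at the threshold, so $t=2\sqrt{q}/3-1$ and $m=\sqrt{q}/3$. To keep $t'+m'\leqslant\sqrt{q}-1$ with $m'=\sqrt{q}/3$ you need $t'\leqslant 2\sqrt{q}/3-1$, i.e.\ you cannot increase $t$ at all; but (\ref{cond}) in the form $(t'-m'+1)(t'-m'+2)\geqslant 2(m')^{2}$ demands $(\sqrt{q}/3)(\sqrt{q}/3+1)\geqslant 2q/9$, which fails for every $q\geqslant 81$. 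So for $p=3$ there is no admissible $t'$, and the ``routine bookkeeping'' you allude to does not go through.

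The paper avoids this by using an \emph{asymmetric} B\'ezout estimate that exploits the degree-$t$ curve already given in the second case. It removes exactly $\sqrt{q}/p$ points to reach $t'=\sqrt{q}-1$, checks (\ref{cond}) for this specific $t'$ (this holds for all odd $p$), and then, via Theorem~\ref{twocurves} and Lemma~\ref{threepolys}, produces a curve of degree at most $\sqrt{q}-1+\sqrt{q}/p$ coprime to the \emph{original} curve of degree $t=\sqrt{q}-1-\sqrt{q}/p$. B\'ezout now gives
\[
|A'|\;\leqslant\;(\sqrt{q}-1-\sqrt{q}/p)(\sqrt{q}-1+\sqrt{q}/p)\;<\;q-\sqrt{q}+3\;=\;|A'|,
\]
the desired contradiction. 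The moral is that in the second case you should not throw away the degree-$t$ curve and rely on $(t'+m')^{2}$; pairing the small-degree curve you already have with the new one is what makes the argument uniform in $p$.
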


\begin{proof}
Let $A$ be a planar arc of size $q-\sqrt{q}+3+\sqrt{q}/p$, so $t=\sqrt{q}-\sqrt{q}/p-1$.

If $A$ is not contained in a curve of degree $t$ then Theorem~\ref{twocurves} implies that it is contained in two curves sharing no common component, each of degree at most $t+p^{\lfloor \log_p t \rfloor}$, which contain $A$. By Bezout's theorem, 
$$
|A| \leqslant (\sqrt{q}-1)^2,$$
a contradiction. Therefore, $A$ is contained in a conic, a contradiction.

If $A$ is contained in a curve of degree $t$ but not contained in a conic then consider $A'$ a subset of $A$ of size $q-\sqrt{q}+3=q+2-t'$. Since $t'=\sqrt{q}-1$, $p^{\lfloor \log_p t' \rfloor}=\sqrt{q}/p$ and
$$
p^{\lfloor \log_p t' \rfloor}(t'+\tfrac{1}{2}p^{\lfloor \log_p t' \rfloor}+\tfrac{3}{2}) \leqslant \tfrac{1}{2}(t'+2)(t'+1).
$$
By Theorem~\ref{twocurves} and Lemma~\ref{threepolys}, $A'$ is contained in the intersection of a curve of degree $\sqrt{q}-\sqrt{q}/p-1$ and a curve of degree $\sqrt{q}+\sqrt{q}/p-1$. Bezout's theorem implies
$$
|A'| \leqslant (\sqrt{q}-\sqrt{q}/p-1)(\sqrt{q}+\sqrt{q}/p-1),
$$
a contradiction. Hence, $A$ is contained in a conic.
\end{proof}

In the case that $q$ is a non-square and non-prime, Theorem~\ref{twocurves} does not improve upon the bound of Voloch mentioned above. However, in the case that $q$ is prime, it does improve on Voloch's bound for primes less than 1783.

\begin{theorem} \label{conicthmnonsquare}
An arc in ${\mathrm{PG}}(2,q)$, $q$ prime, of size at least $q-\sqrt{q}+\frac{7}{2}$ is contained in a conic. 
\end{theorem}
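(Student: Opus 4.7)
The plan is to mirror the proof of Theorem~\ref{conicthmsquare}, exploiting the simplification that when $q$ is prime we have $p^{\lfloor\log_p t\rfloor}=1$ for every $1\le t<q$. Write $s=|A|$ and $t=q+2-s$, so the hypothesis $s\ge q-\sqrt{q}+\tfrac{7}{2}$ becomes $t\le\sqrt{q}-\tfrac{3}{2}$. If $t\le 1$ then $s\ge q+1$, and since the maximum size of a planar arc in $\mathrm{PG}(2,q)$ is $q+1$ for $q$ odd we have $s=q+1$, whence $A$ is a conic by Segre's theorem. So we may assume $t\ge 2$ and, aiming for a contradiction, suppose $A$ is not contained in a conic.

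Apply Theorem~\ref{twocurves} with the given~$t$. In the first case $A$ lies in the intersection of two curves of degree at most $t+1$ sharing no common component, so Bezout's theorem yields
\[
|A|\le (t+1)^2\le\bigl(\sqrt{q}-\tfrac{1}{2}\bigr)^2=q-\sqrt{q}+\tfrac{1}{4},
\]
contradicting $|A|\ge q-\sqrt{q}+\tfrac{7}{2}$. In the second case $A$ lies on a curve $\phi$ of degree $t$, and one must check that condition (\ref{cond}) holds; since $p^{\lfloor\log_p t\rfloor}=1$ this reduces to $t+2\le\tfrac{1}{2}(t+2)(t+1)$, equivalent to $t\ge 1$, and hence is automatic. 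Theorem~\ref{twocurves} then produces a second curve $\psi$ of degree at most $t+1$ containing $A$ and sharing no component with $\phi$, so Bezout gives
\[
|A|\le t(t+1)\le\bigl(\sqrt{q}-\tfrac{3}{2}\bigr)\bigl(\sqrt{q}-\tfrac{1}{2}\bigr)=q-2\sqrt{q}+\tfrac{3}{4},
\]
again contradicting the assumed lower bound on $|A|$, and the theorem follows.

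There is no substantive obstacle here beyond numerical bookkeeping. In fact the prime case is cleaner than Theorem~\ref{conicthmsquare}: because the degree bound $t+1$ already beats $\sqrt{q}$, one does not need to pass to a subset $A'\subset A$ or appeal to Lemma~\ref{threepolys}. The only things to verify are that condition (\ref{cond}) is automatic when $q$ is prime and $t\ge 1$, and that both Bezout inequalities strictly contradict $|A|\ge q-\sqrt{q}+\tfrac{7}{2}$ for every odd prime $q$; both of these are routine, the latter holding comfortably since $\tfrac{7}{2}>\tfrac{1}{4}$ and $-\sqrt{q}+\tfrac{7}{2}>-2\sqrt{q}+\tfrac{3}{4}$ for every $q>0$.
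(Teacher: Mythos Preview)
Your proof is correct and follows the same approach as the paper: verify that $p^{\lfloor\log_p t\rfloor}=1$ so that condition~(\ref{cond}) holds automatically, apply Theorem~\ref{twocurves}, and contradict the size hypothesis via Bezout. The paper's version is terser---it collapses your two cases into the single bound $|A|\le(t+1)^2$ rather than distinguishing $t(t+1)$ in the second case---but the argument is the same, and your explicit handling of $t\le 1$ via Segre's theorem is a welcome bit of care that the paper leaves implicit.
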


\begin{proof}
Let $A$ be a planar arc of size $\lceil q-\sqrt{q}+\frac{7}{2}\rceil$. Then $t=\lfloor \sqrt{q}-\frac{3}{2}\rfloor $ and $p^{\lfloor \log_p t \rfloor}=1$.
Since
$$
p^{\lfloor \log_p t \rfloor}(t+\tfrac{1}{2}p^{\lfloor \log_p t \rfloor}+\tfrac{3}{2}) \leqslant \tfrac{1}{2}(t+2)(t+1)
$$
Theorem~\ref{twocurves} implies that if $A$ is not contained in a conic then it is contained in two curves sharing no common component, each of degree at most $t+1$. By Bezout's theorem, 
$$
|A| \leqslant (\sqrt{q}-\tfrac{1}{2})^2,$$
a contradiction. Therefore, $A$ is contained in a conic.
\end{proof}

%We remark that if we suppose 
%$$
%|A|=q+2- \sqrt{q}+\sum_{i=1}^{\infty} c_i (\sqrt{q})/p^{i},
%$$
%then one can explicitly calculate the best constants $c_i$ and slightly improve on Theorem~\ref{conicthmsquare}, and similarly Theorem~\ref{conicthmnonsquare}. %Specifically 
%$$
%t=\sqrt{q}- \sum_{i=1}^{\infty} c_i (\sqrt{q})/p^{i},
%$$
%so $p^{\lfloor \log_p t \rfloor}=\sqrt{q}/p$, assuming $c_1>0$.

%\begin{theorem} \label{conicthmsquare3}
%If $q=3^{2h+1}$ then an arc of size at least $q-(1+\sqrt{2})\sqrt{q/3}+\frac{7}{2}-\sqrt{3/128q}$ is contained in a conic. 
%\end{theorem}

Our main motivation to start this work was to prove the following conjecture, which we call the $\sqrt{q}$ conjecture, see \cite[pp. 236]{Hirschfeld1998} and also \cite[Problem 1.1]{GPTU2002}.

\begin{conjecture} \label{rootq}
If $q\neq 9$ is an odd square  then the second largest complete arc in ${\mathrm{PG}}(2,q)$ has size $q-\sqrt{q}+1$.
\end{conjecture}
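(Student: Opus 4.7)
The plan is to push the machinery of Theorem~\ref{twocurves} just past the Kestenband threshold $q-\sqrt{q}+1$. Let $A$ be a hypothetical counterexample: a complete arc in $\mathrm{PG}(2,q)$, $q=p^{2h}$ odd with $q\neq 9$, satisfying $|A|=q+2-t \geq q-\sqrt{q}+2$, so $2\leq t\leq \sqrt{q}$. If $A$ were contained in a conic then completeness would force $A$ to equal that conic, contradicting $|A|<q+1$; so $A$ is not contained in a conic. Theorem~\ref{twocurves} then places $A$ in two curves of degree at most $t+p^{\lfloor \log_p t \rfloor}$ with no common component. In the extremal case $t=\sqrt{q}=p^h$ this bound is $2\sqrt{q}$ and Bezout only yields $|A|\leq 4q$, so the plan must either sharpen the degrees of these curves substantially or replace Bezout by a structural statement tailored to near-Kestenband size.

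The first step would be to sharpen Theorem~\ref{main}, removing the Frobenius inflation $p^{\lfloor \log_p t \rfloor}$ when $t$ is itself a power of the characteristic. Concretely, I would aim for two curves of degrees $d_1 \leq \sqrt{q}$ and $d_2 \leq \sqrt{q}-1$, which by Bezout gives $|A| \leq d_1 d_2 \leq q-\sqrt{q} < q-\sqrt{q}+2 \leq |A|$, the sought contradiction. The inflation currently enters because one takes $p$-th powers of coefficients of $F(X,Y)$ to descend to $\mathbb{F}_q$; on arcs of near-Kestenband size these coefficients should display additional $\mathbb{F}_{\sqrt{q}}$-rationality that lets one bypass the power. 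A refined version of Lemma~\ref{threepolys}, producing two curves of unequal degrees from $F(X,Y)$, would be the natural vehicle.

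A second, likely indispensable route is a classification argument modelled on Kestenband's construction. I would attempt to show that any curve of degree at most $\sqrt{q}+1$ containing a large subset of $A$ and carrying the Segre tangent data encoded in $F(X,Y)$ must be, up to projective transformation, a Hermitian curve. Two distinct Hermitian curves of degree $\sqrt{q}+1$ meet in at most $q-\sqrt{q}+1$ points, matching Kestenband's construction exactly and giving the conjectured bound. The key input is that the coordinate-free tangent data from Segre's lemma of tangents is rigid enough at near-extremal size to force the Hermitian matrix relation $h_{ij}=h_{ji}^{\sqrt{q}}$ on the coefficients of the second curve.

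The main obstacle is the first step. The factor $p^{\lfloor \log_p t \rfloor}$ is an apparently essential feature of the current proof precisely when $t$ is a power of $p$, which is exactly the extremal range needed; any attempt to kill it must exploit a finer arithmetic structure of $F(X,Y)$ that the present argument does not see. A secondary obstacle is the sharp exception at $q=9$: any successful proof must break down there, presumably because the tangent polynomial $F$ degenerates or admits extra solutions when $p=3$ and $h=1$, so the argument must isolate this degeneracy intrinsically rather than treating $q=9$ as an ad hoc exclusion.
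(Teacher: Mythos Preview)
This statement is Conjecture~\ref{rootq} in the paper, not a theorem; the paper does \emph{not} prove it. The authors present it explicitly as an open problem and as their motivation, and the strongest result they obtain toward it is Theorem~\ref{conicthmsquare}, which still leaves the range $\sqrt{q}-\sqrt{q}/p \leqslant t \leqslant \sqrt{q}$ unresolved. So there is no ``paper's own proof'' to compare your proposal against.

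Your submission is not a proof but a research outline, and you are candid about this: you identify the two main obstacles (eliminating the Frobenius inflation $p^{\lfloor \log_p t\rfloor}$ when $t$ is near a $p$-power, and accounting for the $q=9$ exception) without resolving either. Those obstacles are genuine. The inflation term enters through Lemma~\ref{tech} and Lemma~\ref{almost}, and the paper gives no mechanism for removing it when $t$ is close to $\sqrt{q}$; your suggestion that ``additional $\mathbb{F}_{\sqrt{q}}$-rationality'' of the coefficients of $F(X,Y)$ should allow one to bypass the $p$-th power is a hope, not an argument. Likewise, the idea of forcing the containing curves to be Hermitian from the tangent data is plausible heuristics but you supply no lemma or computation that would pin down the relation $h_{ij}=h_{ji}^{\sqrt{q}}$. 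As it stands, the proposal is a reasonable sketch of where a proof \emph{might} come from, but it contains no step that actually closes the gap between Theorem~\ref{conicthmsquare} and Conjecture~\ref{rootq}.
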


The examples of Kestenband imply that, if true, the bound in the conjecture is tight. 

Combining Segre~\cite{Segre1967}, Theorem~\ref{twocurves} and Theorem~\ref{conicthmsquare}, we get the following theorem.

\begin{theorem}
If there is a counterexample $A$ to Conjecture~\ref{rootq} of size $q+2-t$ then $\sqrt{q}-\sqrt{q}/p \leqslant t \leqslant \sqrt{q}$ and if $t \neq \sqrt{q}$, $A$ is contained in the intersection of two curves, sharing no common component, and each of degree at most $t+\sqrt{q}/p$.
\end{theorem}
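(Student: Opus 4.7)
The plan is to prove the three assertions separately: the upper bound $t \leqslant \sqrt{q}$, the lower bound $t \geqslant \sqrt{q} - \sqrt{q}/p$, and (when $t \neq \sqrt{q}$) the existence of two curves of degree at most $t + \sqrt{q}/p$ through $A$ sharing no common component.

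First I would deduce the two bounds on $t$ from size considerations alone. Since $A$ is a counterexample to Conjecture~\ref{rootq}, $|A| = q + 2 - t > q - \sqrt{q} + 1$, and as $\sqrt{q} = p^h$ is an integer this forces $t \leqslant \sqrt{q}$. For the lower bound I would apply Theorem~\ref{conicthmsquare}: because $A$ is not contained in a conic, that theorem rules out $|A| \geqslant q - \sqrt{q} + 3 + \sqrt{q}/p$, which gives $t > \sqrt{q} - 1 - \sqrt{q}/p$; since $\sqrt{q}/p = p^{h-1}$ is an integer, we get $t \geqslant \sqrt{q} - \sqrt{q}/p$. For the two-curves statement, assuming $t < \sqrt{q} = p^h$ yields $\lfloor \log_p t \rfloor \leqslant h - 1$, hence $p^{\lfloor \log_p t \rfloor} \leqslant \sqrt{q}/p$, so the degree bound $t + p^{\lfloor \log_p t \rfloor}$ supplied by Theorem~\ref{twocurves} is at most $t + \sqrt{q}/p$. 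If $A$ is not contained in any curve of degree $t$, the first clause of Theorem~\ref{twocurves} directly produces the two required curves; if instead $A$ lies on a curve $\phi$ of degree $t$, I would apply the second clause after verifying the technical inequality~(\ref{cond}), and invoke Segre's envelope theorem from \cite{Segre1967} whenever (\ref{cond}) fails.

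The hard part will be case~(b) of Theorem~\ref{twocurves} when condition~(\ref{cond}) is not satisfied. A direct calculation shows (\ref{cond}) holds throughout $\sqrt{q} - \sqrt{q}/p \leqslant t < \sqrt{q}$ when $p \geqslant 5$, and can fail only for $p = 3$ with $h \geqslant 3$ for $t$ near the lower end of the range. In those residual cases the tangent-envelope of degree $2t$ from \cite{Segre1967}, together with the bi-homogeneous polynomial $F(X,Y)$ furnished by Theorem~\ref{main}, must be combined to exhibit a second curve through $A$ of degree at most $t + \sqrt{q}/p$ distinct from $\phi$. This is the subtle step where the synthesis of all three cited results becomes indispensable.
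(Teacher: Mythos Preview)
Your derivation of the two bounds on $t$ and your treatment of the case when $A$ is not contained in a curve of degree $t$ (via the first clause of Theorem~\ref{twocurves}), as well as the case when $A$ does lie on such a curve and condition~(\ref{cond}) is satisfied (via the second clause), are correct and in line with the paper, which states the result without proof as a consequence of Segre~\cite{Segre1967}, Theorem~\ref{twocurves} and Theorem~\ref{conicthmsquare}. Your observation that~(\ref{cond}) holds throughout the range $\sqrt{q}-\sqrt{q}/p\leqslant t<\sqrt{q}$ whenever $p\geqslant 5$, and can fail only for $p=3$ with $h\geqslant 3$, is also correct.

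The gap is in your handling of that residual $p=3$ case. Segre's result from~\cite{Segre1967} that you invoke produces a curve of degree $2t$ in the \emph{dual} plane containing the tangent lines of $A$ as points; it says nothing about curves in the original plane passing through the points of $A$, and certainly does not yield a curve of degree at most $t+\sqrt{q}/p$ coprime to $\phi$. Likewise, the bihomogeneous form $F(X,Y)$ of Theorem~\ref{main} encodes the tangent functions $f_y$, not a curve through $A$. Your assertion that these ingredients ``must be combined'' to furnish the missing curve is unsupported, and as stated the argument does not go through.

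A way that does work is to show this sub-case cannot occur at all. Suppose $A$ lies on a curve $\phi$ of degree $t$. Delete points of $A$ to obtain $A''\subset A$ with parameter $t''\geqslant t$ chosen as the least value for which~(\ref{cond}) holds (one checks $t''<\sqrt{q}$, so $p^{\lfloor\log_p t''\rfloor}=\sqrt{q}/p$). Multiplying $\phi$ by $t''-t$ linear forms gives a degree-$t''$ curve through $A''$ having $\phi$ as a factor; the second clause of Theorem~\ref{twocurves} then supplies a curve $\psi$ of degree at most $t''+\sqrt{q}/p$ through $A''$ that is coprime to $\phi$. B\'ezout now forces $q+2-t''=|A''|\leqslant t\,(t''+\sqrt{q}/p)$, and a short estimate (using that $t$ lies in the narrow window where~(\ref{cond}) fails) gives a contradiction. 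Hence a counterexample with $t<\sqrt{q}$ is never contained in a curve of degree $t$, so only the first clause of Theorem~\ref{twocurves} is needed and the two required curves follow. This mirrors the subset trick used in the proof of Theorem~\ref{conicthmsquare}, and shows that the citation of Segre~\cite{Segre1967} is not actually doing the work you ascribe to it.
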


\section{Bounds on arcs contained in low degree curves}

In 1973, Zirilli \cite{Zirilli1973} constructed arcs of size approximately $\frac{1}{2}q$, contained in a cubic curve by exploiting the group structure of an elliptic curve. If the group $H$ of the ${\mathbb F}_q$-rational points of a non-singular cubic curve has even order, then the coset of a subgroup of $H$ of index two is a planar arc. This was taken further by Voloch \cite{Voloch1987} who proved that Zirilli's construction leads to complete arcs of size $\frac{1}{2}(q+1)-\epsilon$, for all $|\epsilon|< \sqrt{q}$, for $q$ odd, $(\epsilon,q)=1$ and $q\geqslant 175$. Further results were obtained by Voloch \cite{Voloch1990a}, Szonyi \cite{Szonyi1987} and Giulietti \cite{Giulietti2002}.

Theorem~\ref{twocurves} leads to some surprising bounds if we assume that $A$ is contained in a curve $\psi$ of low degree $d$. To prove Theorem~\ref{boundeddeg}, we need a short lemma.

\begin{lemma} \label{threepolys}
Suppose $f$, $g$ and $h$ are three polynomials over ${\mathbb F}_q$ and that $h$ has degree less than $q$. If $f$ and $g$ are co-prime then there is a linear combination of $f$ and $g$ which is co-prime with $h$.
\end{lemma}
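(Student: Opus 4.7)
The natural approach is to parametrise the linear combinations of $f$ and $g$ projectively, i.e.\ look at the pencil $\{\alpha f+\beta g \mid [\alpha:\beta]\in \mathrm{PG}(1,q)\}$, which contains $q+1$ members up to scalars. I would then show that only a few members of this pencil can share a common factor with $h$, and count.

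The key step is the following observation about the pencil. Let $\pi$ be any irreducible factor of $h$. If two projectively distinct combinations $\alpha f+\beta g$ and $\alpha' f+\beta' g$ were both divisible by $\pi$, then taking an invertible $\mathbb{F}_q$-linear combination of these two would show that $\pi$ divides both $f$ and $g$, contradicting $\gcd(f,g)=1$. Hence each irreducible factor of $h$ can divide at most one member of the pencil (up to scalar). Since $h$ has at most $\deg h$ distinct irreducible factors, at most $\deg h$ projective classes $[\alpha:\beta]$ are "forbidden".

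The counting then finishes the proof: we have $q+1$ projective classes and strictly fewer than $q$ forbidden ones, so at least two good classes remain, and any of these yields a linear combination coprime to $h$. One small point to dispatch along the way is that a nonzero class $[\alpha:\beta]$ actually gives a nonzero polynomial $\alpha f+\beta g$; this follows from coprimality (assuming $f,g$ both nonzero, a linear dependence would force a common factor equal to one of them).

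The only mildly delicate issue is making sure the bound "at most $\deg h$ distinct irreducible factors" is used cleanly together with the strict inequality $\deg h < q$, so that $q+1-\deg h\geqslant 2>0$. No algebraic geometry or deeper machinery is needed; the whole argument is an application of $\gcd$ manipulations together with the pigeonhole count on $\mathrm{PG}(1,q)$.
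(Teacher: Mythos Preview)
Your argument is correct and essentially identical to the paper's: both pigeonhole the members of the pencil against the fewer-than-$q$ irreducible factors of $h$, using that a common factor of two distinct members would divide both $f$ and $g$. The only cosmetic difference is that the paper parametrises affinely by $f+\lambda g$ with $\lambda\in\mathbb{F}_q$ rather than projectively, and you are a little more explicit about why the combinations are nonzero.
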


\begin{proof}
Suppose that for each $\lambda \in {\mathbb F}_q$ there is a factor $h_{\lambda}$ of $h$ that is a factor of $f+\lambda g$. Since $h$ has less than $q$ factors, there are $\lambda,\nu \in {\mathbb F}_q$ with $\lambda \neq \mu$ for which $h_{\lambda}=h_{\nu}$. But then $h_\lambda$ is a factor of both $f$ and $g$, contradicting the fact that $f$ and $g$ are co-prime.
\end{proof}

\begin{theorem} \label{boundeddeg}
If $A$ is a planar arc of $\mathrm{PG}(2,q)$ contained in a curve of degree $d<2p/(4+\sqrt{5})-1$, where $q=p^h$ is odd, then 
$$
|A| < \frac{d}{d+1}(q+1+\frac{q}{p})+1.
$$
\end{theorem}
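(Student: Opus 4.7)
The plan is to combine Theorem~\ref{twocurves} with Bezout's theorem, using Lemma~\ref{threepolys} to glue the given curve $\psi$ of degree $d$ containing $A$ with a second curve produced by Theorem~\ref{twocurves}. Set $t=q+2-|A|$. Throughout I assume $A$ is not contained in a conic; this case must be checked separately since Theorem~\ref{twocurves} excludes it.

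The first task is to produce a second curve $\phi$ of degree at most $t+p^{\lfloor\log_p t\rfloor}$ which contains $A$ and has no common component with $\psi$. If $d\leqslant t$, I would multiply $\psi$ by an arbitrary polynomial of degree $t-d$ to realize $A$ on a curve of degree exactly $t$; the second part of Theorem~\ref{twocurves}, assuming the technical condition (\ref{cond}) holds for this $t$, then yields such a $\phi$ coprime to this extension, and hence to $\psi$ itself. In the opposite case, where $A$ lies on no curve of degree $t$, the first part of Theorem~\ref{twocurves} supplies two coprime curves through $A$ of degree at most $t+p^{\lfloor\log_p t\rfloor}$, and Lemma~\ref{threepolys} (applicable since $\deg\psi=d<q$) lets me take a linear combination of the two that is coprime to $\psi$.

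With $\psi$ and $\phi$ coprime, Bezout's theorem gives
$$
|A|\leqslant d(t+p^{\lfloor\log_p t\rfloor})\leqslant d(t+q/p),
$$
the last step using $t<q=p^h$ so that $p^{\lfloor\log_p t\rfloor}\leqslant p^{h-1}=q/p$. Substituting $t=q+2-|A|$ and rearranging gives $(d+1)|A|\leqslant d(q+2+q/p)$, whence
$$
|A|\leqslant\frac{d}{d+1}(q+1+q/p)+\frac{d}{d+1}<\frac{d}{d+1}\bigl(q+1+\tfrac{q}{p}\bigr)+1,
$$
as required.

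The principal obstacle is showing that the hypothesis $d<2p/(4+\sqrt{5})-1$ does in fact guarantee condition (\ref{cond}) for every $t$ arising in the argument. Condition (\ref{cond}) is a quadratic inequality in $t$ that fails precisely when $t$ is too close from above to the power of $p$ underneath it, and the explicit constant $4+\sqrt{5}$ should fall out of comparing the critical $t$-values there with the bound $t\leqslant (q+1-dq/p)/(d+1)$ coming from the contrapositive of the theorem. When (\ref{cond}) fails outright for the chosen $t$, one should instead pass to a slightly smaller subset of $A$ (as indicated in the remark following Theorem~\ref{twocurves}) for which (\ref{cond}) holds, and absorb the resulting discrepancy into the additive $+1$ appearing in the stated bound.
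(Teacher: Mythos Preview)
Your overall strategy—Theorem~\ref{twocurves} plus Lemma~\ref{threepolys} plus Bezout—is exactly what the paper does, and your arithmetic in the Bezout step is fine. The substantive difference, and the real gap in your proposal, is in how condition~(\ref{cond}) is handled.

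The paper argues by contradiction: supposing $|A|\geqslant d(t+q/p)+1$, it sets $c=1+\tfrac{1}{2}\sqrt{5}$ (so that $2(c+1)=4+\sqrt{5}$ and the hypothesis reads $d+1<p/(c+1)$) and from this derives the \emph{lower} bound $t>cq/p$. Since $c>1$, this forces $q/p\leqslant t<q$, so $p^{\lfloor\log_p t\rfloor}=q/p$ exactly; substituting $q/p<t/c$ into~(\ref{cond}) then reduces it to a quadratic inequality in $t$ alone whose coefficients are governed by $c$, and this is precisely where the constant $4+\sqrt{5}$ enters. Once~(\ref{cond}) is in hand, both branches of Theorem~\ref{twocurves} give two coprime curves of degree at most $t+q/p$ through $A$, Lemma~\ref{threepolys} produces one coprime to $\psi$, and Bezout yields the contradiction $|A|\leqslant d(t+q/p)$.

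Your suggestion to compare the critical $t$-values for~(\ref{cond}) against the \emph{upper} bound $t\leqslant(q+1-dq/p)/(d+1)$ points in the wrong direction: condition~(\ref{cond}) fails exactly when $t$ sits just above a power of $p$, and an upper bound on $t$ cannot exclude such values. What is needed is the lower bound $t>cq/p$, which both pins down $p^{\lfloor\log_p t\rfloor}=q/p$ and gives the ratio $t/p^{\lfloor\log_p t\rfloor}>c$ required to verify~(\ref{cond}). Your fallback of deleting points is not what the paper does and would not recover the stated bound, since the number of points one might need to remove to restore~(\ref{cond}) is comparable to $t$, not $O(1)$. You also silently assume $d\leqslant t$ when you multiply $\psi$ up to degree $t$; this is not automatic and would need to be justified.
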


\begin{proof}
Suppose that $|A|=q+2-t\geqslant (t+q/p)d+1$, i.e.
$$
t \leqslant \frac{p(q+1)-dq}{(d+1)p}.
$$

Let $c=\frac{1}{2}\sqrt{5}+1$. By hypothesis, $d<p/(c+1)-1$, which implies 
$$
q+2-t<(\frac{p}{c+1}-1)(t+\frac{q}{p})+1<\frac{pt+q}{c+1}+2-t
$$
and so
$t >cq/p$. %Note that $(c-1)^2=\frac{5}{4}$, so $c^2-c=\frac{1}{4}$.
Hence, $p^{\lfloor \log_p t \rfloor}=q/p$ and
$$
p^{\lfloor \log_p t \rfloor}(t+\tfrac{1}{2}p^{\lfloor \log_p t \rfloor}+\tfrac{3}{2})<\frac{t}{c}(t+\frac{t}{2c}+\tfrac{3}{2}) =(\frac{1}{c}+\frac{1}{2c^2})t^2+\frac{3t}{2c}<\tfrac{1}{2}(t+2)(t+1).
$$

By Theorem~\ref{twocurves}, there are two co-prime polynomials $f$ and $g$, of degree at most $t+\sqrt{q}/p$, whose zero-sets contain $A$. By Lemma~\ref{threepolys}, there is a linear combination of $f$ and $g$ which is co-prime to $h$, the polynomial whose zero set is $\psi$, the curve of degree $d$ containing $A$. 

Bezout's theorem implies that 
$$
|A|=q+2-t \leqslant (t+\frac{q}{p})d.
$$
a contradiction.

Therefore, 
$$
t> \frac{p(q+1)-dq}{(d+1)p},
$$
from which we get the desired bound.
\end{proof}

\section{Classification of arcs of size $q-1$ and $q-2$.}
It has been known for a long time that an arc of size $q$ is incomplete. This was proved by Segre \cite{Segre1955b} (with an amendment by B\"uke \cite{Buke1974}) for  $q$ odd, and by Tallini in \cite{Tallini1957} for $q$ even. The next question to answer (``Il primo nuovo quesito da porsi" as Segre writes in his 1955 paper) is whether there exists a complete arc of size $q-1$ in $\mathrm{PG}(2,q)$. Since the standard frame extended with the points $(1,2,3)$ and $(1,3,4)$ gives a complete arc of size 6 for $q=7$, the answer is affirmative. However, in spite of this small example, Segre expected the answer to be negative for sufficiently large $q$: ``Rimane tuttavia da indagare se, com'\`e da ritenersi probabile, la risposta alla suddetta domanda non diventi invece negativa quando si supponga $q$ sufficientemente grande". Segre's expectations were correct, there are no complete planar arcs of size $q-1$ for $q>13$. The classification of planar arcs of size $q-1$ is given in Corollary \ref{qminus1}.

Segre's bound $|S|\leqslant q-\sqrt{q}+1$ for $q$ even, proves the non-existence of complete arcs of size $q-1$ for $q$ even. 
For $q$ odd the situation is quite different. The combination of computer aided calculations (see for example the articles by Coolsaet and Sticker \cite{CS2009,CS2011} and Coolsaet \cite{Coolsaet2015}) with theoretical upper bounds (see Section \ref{section:rootq}) were not enough to complete the classification of complete arcs of size $q-1$.

According to \cite[Theorem 10.33]{Hirschfeld1998}, there are 15 values of $q$, all odd and satisfying $q\geqslant 31$ ($q=31$ has since been ruled out by Coolsaet \cite{Coolsaet2015}), for which it is not yet determined whether an arc of size $q-1$ is complete or not. We can now complete the classification of planar arcs of size $q-1$. 

\begin{corollary} \label{qminus1}
The only complete planar arcs of size $q-1$ occur for $q=7$ (there are $2$ projectively distinct arcs of size $6$), for $q=9$ (there is a unique arc of size $8$), $q=11$ (there is a unique arc of size $10$, see Example~\ref{10arc}) and $q=13$ (there is a unique arc of size $12$, see Example~\ref{12arc}). 
\end{corollary}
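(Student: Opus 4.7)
The plan is to use Theorem~\ref{twocurves} with $t=3$ to rule out complete arcs of size $q-1$ for every odd $q\geqslant 31$, and to appeal to the existing classification in the literature for the remaining values. For $q$ even, Segre's bound $|A|\leqslant q-\sqrt{q}+1$ for the second-largest complete arc forces $q\leqslant 4$, and the cases $q=2,4$ are trivial. For $q<31$ odd the question is settled by \cite[Theorem 10.33]{Hirschfeld1998} together with \cite{CS2009,CS2011,Coolsaet2015}, yielding exactly the examples listed. So it suffices to show that no complete arc of size $q-1$ exists when $q\geqslant 31$ is odd. Since an arc of size $q-1$ contained in a conic extends to a $(q+1)$-arc on the conic and is therefore not complete, I may assume throughout that $A$ is not contained in a conic and set $t=3$.

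If $p\geqslant 5$, then $p^{\lfloor\log_p 3\rfloor}=1$ and condition~(\ref{cond}) reads $5\leqslant 10$, so it holds. Theorem~\ref{twocurves} places $A$ in the intersection of two curves of degree at most $4$ sharing no common component, and Bezout gives $|A|\leqslant 16$, hence $q\leqslant 17$, contradicting $q\geqslant 31$. This dispatches every relevant $q$ of characteristic at least $5$.

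If $p=3$, then $p^{\lfloor\log_p 3\rfloor}=3$. When $A$ is not contained in a cubic, Theorem~\ref{twocurves} produces two coprime curves of degree at most $6$ through $A$, whence $|A|\leqslant 36$ and $q\leqslant 37$, impossible for a power of $3$ with $q\geqslant 31$. When $A\subset\phi$ for a cubic $\phi$, condition~(\ref{cond}) fails at $t=3$, so, as suggested in the remark following Theorem~\ref{twocurves}, I pass to a subset $A'\subset A$ of size $q-4$, giving $t'=6$, for which (\ref{cond}) now reads $27\leqslant 28$. Realising $A'$ inside the degree-$6$ curve $\phi\cdot\phi'$ for an auxiliary cubic $\phi'$, Theorem~\ref{twocurves} supplies another curve $g$ of degree at most $9$ containing $A'$ and sharing no common component with $\phi\cdot\phi'$, hence none with $\phi$. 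Bezout applied to $\phi$ and $g$ gives $|A'|\leqslant 27$, so $q\leqslant 31$, excluding every power of $3$ strictly exceeding $27$.

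The main obstacle I foresee is the bookkeeping in the ``$A\subset\phi$'' branch when $p=3$: Theorem~\ref{twocurves} only guarantees a curve coprime to the auxiliary degree-$6$ curve $\phi\cdot\phi'$, and one has to argue that within the resulting pencil some representative remains coprime to the original cubic $\phi$. This is precisely the role that Lemma~\ref{threepolys} plays in the proof of Theorem~\ref{conicthmsquare}, and the same linear-combination trick should adapt verbatim here. Combined with the small-$q$ classification quoted above, this completes the plan.
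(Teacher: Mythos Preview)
Your argument is correct and follows essentially the same route as the paper, which simply quotes Theorems~\ref{conicthmsquare} and~\ref{conicthmnonsquare} (themselves specialisations of Theorem~\ref{twocurves}) to cover $q$ square with $q\geqslant 25$ and $q$ prime with $q\geqslant 23$, and cites \cite[Table~9.4]{Hirschfeld1998} for $q\leqslant 23$; the remaining non-square, non-prime odd $q$ are handled by the discussion preceding the corollary. Your direct appeal to Theorem~\ref{twocurves} with $t=3$, together with the pass to $t'=6$ in characteristic~$3$ (which mirrors the subset trick inside the proof of Theorem~\ref{conicthmsquare}), is a more explicit variant that treats all odd $q\geqslant 31$ uniformly without separating primes from squares.

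Two small points. Your anticipated obstacle is not one: a curve sharing no component with $\phi\cdot\phi'$ automatically shares none with the factor $\phi$, so no pencil manoeuvre via Lemma~\ref{threepolys} is needed at that step. And when you pass to the subset $A'$ of size $q-4$, you must choose it so that $A'$ itself is not contained in a conic, since Theorem~\ref{twocurves} requires this of the arc to which it is applied; as $A$ is not on a conic and $|A|\geqslant 80$ in the relevant range, you can fix six points of $A$ not lying on a common conic and retain them in $A'$.
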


\begin{proof}
In \cite[Table 9.4]{Hirschfeld1998}, arcs of size $q-1$ are classified  for $q \leqslant 23$. 

Theorem~\ref{conicthmsquare} implies that an arc of size $q-1$ is contained in a conic for all $q$ square, $q \geqslant 25$. And Theorem~\ref{conicthmnonsquare}, that an arc of size $q-1$ is contained in a conic for all $q$ prime, $q \geqslant 23$.
%If $p \neq 3$ then Theorem~\ref{twocurves} implies that an arc of size $q-1$ is contained in two curves of degree at most $4$ which do not share a common component. Then Bezout's theorem implies $q-1 \leqslant 16$. 

%If $p=3$ and the arc is not contained in a curve $\psi$ of degree $3$ then Theorem~\ref{twocurvesweak} implies that the arc is contained in two curves, which do not share a common component, of degree at most $6$. Then Bezout's theorem implies $q-1 \leqslant 36$. However, \cite{CS2011} rules out $q=27$, so we can assume that the arc is contained in a curve $\psi$ of degree three. Now, removing three points from the arc, Theorem~\ref{twocurves} implies that the arc is contained in two curves of degree at most $9$, which do not share a common component. As in the proof of Theorem~\ref{boundeddeg}, we can find a curve of degre at most $9$ which does not have a common component with $\psi$. Then, Bezout's theorem implies $q-1 \leqslant 27$.
\end{proof}

We can also complete the classification of planar arcs of size $q-2$. 

\begin{corollary} \label{qminus2}
The only complete planar arcs of size $q-2$ occur for $q=8$ (there are $3$ projectively distinct arcs of size $6$), for $q=9$ (there is a unique arc of size $7$) and $q=11$ (there are $3$ projectively distinct arcs of size $9$). 
\end{corollary}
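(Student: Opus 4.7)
The plan follows the template of the proof of Corollary~\ref{qminus1}: split by the arithmetic type of $q$. For $q$ even, Segre's bound \cite{Segre1967} of $q-\sqrt{q}+1$ on every complete arc that is not a hyperoval, applied to a $(q-2)$-arc (which is never a hyperoval), gives $q\leqslant 9$, leaving only $q\in\{2,4,8\}$. For $q$ odd prime, Theorem~\ref{conicthmnonsquare} applied to $|A|=q-2$ requires $\sqrt{q}\geqslant \tfrac{11}{2}$, i.e.\ $q\geqslant 31$, beyond which $A$ lies in a conic and so extends; thus only primes $q\leqslant 29$ need examination. For $q$ odd square, Theorem~\ref{conicthmsquare} applied to $|A|=q-2$ rewrites as $\sqrt{q}(p-1)/p\geqslant 5$, which holds for every odd square $q$ except $q=9$ and $q=25$.

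Steps 1--3 reduce the problem to a finite list of small $q$, together with the uncovered values $q=25$ and the odd non-prime non-squares $q=27, 125, 243, \ldots$. I would dispatch these by citing the classifications in \cite[Chapter 9]{Hirschfeld1998} and the computer enumerations of Coolsaet and Sticker \cite{CS2009, CS2011}; the only complete arcs of size $q-2$ that emerge are the three projectively distinct ones of size $6$ at $q=8$, the unique arc of size $7$ at $q=9$, and the three arcs of size $9$ at $q=11$, matching the statement exactly.

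The chief obstacle is $q=25$. There one has $t=4$ and $p^{\lfloor \log_p t\rfloor}=5^0=1$, so condition~(\ref{cond}) is satisfied but Theorem~\ref{twocurves} produces only two curves of degree at most $5$; Bezout then merely gives $|A|\leqslant 25$, which fails to contradict $|A|=23$. The machinery of this paper therefore does not reach $q=25$, and this case (along with the odd non-prime non-squares below the reach of Voloch's \cite{Voloch1991} bound) must be handled by the prior computational classification of complete arcs in $\mathrm{PG}(2,q)$ for small $q$.
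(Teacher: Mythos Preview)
Your proposal is correct and follows essentially the same route as the paper's proof: reduce large $q$ via Theorems~\ref{conicthmsquare} and~\ref{conicthmnonsquare}, then invoke Hirschfeld's tables for $q\leqslant 23$ and the Coolsaet--Sticker computations for the leftover small values. You are in fact more explicit than the paper in two respects: you handle the even case via Segre's $q-\sqrt{q}+1$ bound (which the paper omits from the proof, presumably as well-known), and you correctly diagnose that $q=25$ lies beyond the reach of Theorem~\ref{twocurves} and must be settled by \cite{CS2009}, exactly as the paper does.

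One small caution: your parenthetical about ``odd non-prime non-squares $q=27,125,243,\ldots$'' being dispatched by computation is slightly loose, since the cited enumerations stop at $q=31$. In fact for $t=4$ and $p\geqslant 5$ one has $p^{\lfloor\log_p t\rfloor}=1$, so Theorem~\ref{twocurves} itself gives two coprime curves of degree at most $5$ and Bezout forces $|A|\leqslant 25$; this disposes of $q=125,343,\ldots$ directly. For $p=3$ the remaining non-square is $q=27$, covered by \cite{CS2011} (and Voloch's bound handles $q=243$ and beyond). The paper's proof is equally silent on this point, so this is a refinement rather than a correction.
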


\begin{proof}
In \cite[Table 9.4]{Hirschfeld1998}, arcs of size $q-2$ are classified for $q \leqslant 23$. 

Theorem~\ref{conicthmsquare} implies that an arc of size $q-2$ is contained in a conic for all $q$ square, $q \geqslant 49$. And Theorem~\ref{conicthmnonsquare}, that an arc of size $q-2$ is contained in a conic for all $q$ prime, $q \geqslant 31$.

The remaining cases are ruled out by the afore-mentioned computational results of Coolsaet and Sticker \cite{CS2009,CS2011} and Coolsaet \cite{Coolsaet2015}.
%If $p \neq 3$ then Theorem~\ref{twocurves} implies that an arc of size $q-2$ is contained in two curves of degree at most $5$ which do not share a common component. Then Bezout's theorem implies $q-2 \leqslant 25$. 
%If $p=3$ and the arc is not contained in a curve $\psi$ of degree $4$ then Theorem~\ref{twocurvesweak} implies that the arc is contained in two curves, which do not share a common component, of degree at most $7$. Then Bezout's theorem implies $q-1 \leqslant 49$. However, \cite{CS2011} rules out $q=27$, so we can assume that the arc is contained in a curve $\psi$ of degree four. Now, removing two points from the arc, Theorem~\ref{twocurves} implies that the arc is contained in two curves of degree at most $9$, which do not share a common component. As in the proof of Theorem~\ref{boundeddeg}, we can find a curve of degree at most $9$ which does not have a common component with $\psi$. Then, Bezout's theorem implies $q-1 \leqslant 36$.
\end{proof}

As Segre would say ``Il primo nuovo quesito da porsi" is whether there are any more arcs of size $q-3$ to be found. The computational results of Coolsaet and Sticker \cite{CS2009,CS2011} and Coolsaet \cite{Coolsaet2015} rule out any examples being found for $19 \leqslant q \leqslant 31$.  Theorem~\ref{twocurves} and Bezout's theorem rule out any new examples being found for $q \geqslant 41$. Voloch's bound from \cite{Voloch1991} rules out $q=32$. We can deduce from \cite[Table 9.4]{Hirschfeld1998} that there are examples of complete arcs of size $q-3$ for $q=9,13,16$ and $17$. The only remaining case is now $q=37$. Furthermore, Theorem~\ref{twocurves} implies that if there is an arc of size $34$ in $\mathrm{PG}(2,37)$ then it is contained in the intersection of two sextic curves, which do not share a common component.

\section{The tangent functions.}

Let $S$ be an arc of $\mathrm{PG}(2,q)$ of size $q+2-t$.

Each point of $S$ is incident with precisely $t$ tangents, where a tangent is a line of 
$\mathrm{PG}(2,q)$ which is incident with exactly one point of $S$.

We define a homogeneous polynomial of degree $t$ in three variables $X=(X_1,X_2,X_3)$
$$
f_a(X)=\prod_{i=1}^t \alpha_i(X),
$$
where $\alpha_i(X)$ are linear forms whose kernels are the $t$ tangents to the arc $S$ incident with a point $a$ of $S$. This defines $f_a(X)$ up to scalar factor. Let $e$ be an arbitrary fixed element of $S$ and scale $f_a(X)$ so that 
$$
f_a(e)=(-1)^{t+1}f_{e}(a).
$$

The following is Segre's lemma of tangents, originally from \cite{Segre1967}. It appears again in a coordinate-free version in \cite{Ball2012}, and here it is further simplified by scaling.

\begin{lemma} \label{segre}
For all $x,y \in S$, 
$$
f_x(y)=(-1)^{t+1} f_y(x).
$$
\end{lemma}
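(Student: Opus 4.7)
The plan is to establish the three-point version of Segre's lemma first,
$$
f_a(b) f_b(c) f_c(a) = (-1)^{t+1} f_a(c) f_c(b) f_b(a) \quad \text{for three distinct } a, b, c \in S,
$$
and then to deduce the two-point identity of Lemma~\ref{segre} by specializing $c = e$ and applying the scaling condition $f_v(e) = (-1)^{t+1} f_e(v)$ to the two factors $f_a(e)$ and $f_b(e)$. The powers of $(-1)^{t+1}$ combine to $(-1)^{2t+2}=1$ on the right and leave one factor of $(-1)^{t+1}$ on the left, while the nonzero quantities $f_e(a)$ and $f_e(b)$ cancel (nonzero because no tangent at $e$ passes through another point of $S$), leaving exactly $f_a(b)=(-1)^{t+1}f_b(a)$. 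The remaining cases of Lemma~\ref{segre} are trivial: $x=y$ gives $f_x(x)=0$ on both sides, and $x=e$ or $y=e$ is just the scaling condition itself.

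For the three-point identity I would use the classical triangle argument. Fix vector representatives $a, b, c \in \mathbb{F}_q^3$; since $a, b, c$ are non-collinear they form a basis, so every other point $d \in S$ has a representative $d = \alpha_d a + \beta_d b + \gamma_d c$ with $(\alpha_d, \beta_d, \gamma_d)$ unique up to common scalar. For each of the $q-1$ lines $\ell$ through $a$ other than $ab$ and $ac$, the intersection $\ell \cap bc$ is a point of $bc \setminus \{b, c\}$ which I write as $\beta_\ell b + \gamma_\ell c$, and I attach to $\ell$ the well-defined ratio $r_\ell = \gamma_\ell/\beta_\ell \in \mathbb{F}_q^\times$. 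As $\ell$ varies, $r_\ell$ ranges exactly once over $\mathbb{F}_q^\times$, so $\prod_\ell r_\ell = -1$. Splitting by line type: a tangent $\{\alpha_i = 0\}$ at $a$ yields $r_\ell = -\alpha_i(b)/\alpha_i(c)$, so the tangents together contribute $(-1)^t f_a(b)/f_a(c)$; a bisecant $ad$ meets $bc$ at $\beta_d b + \gamma_d c$, contributing the ratio $\gamma_d/\beta_d$. This gives
$$
(-1)^t \frac{f_a(b)}{f_a(c)} \prod_{d \in S \setminus \{a,b,c\}} \frac{\gamma_d}{\beta_d} = -1,
$$
together with the two cyclically analogous identities at $b$ (using $\alpha_d/\gamma_d$ on the opposite side $ca$) and at $c$ (using $\beta_d/\alpha_d$ on the opposite side $ab$).

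Multiplying the three identities, each point $d \in S \setminus \{a, b, c\}$ contributes $(\gamma_d/\beta_d)(\alpha_d/\gamma_d)(\beta_d/\alpha_d) = 1$, so the bisecant ratios telescope away and only the tangent factors remain, giving $(-1)^{3t}\, f_a(b) f_b(c) f_c(a)/(f_a(c) f_c(b) f_b(a)) = (-1)^3 = -1$, which rearranges to the three-point identity since $3t$ and $t$ have the same parity. The main obstacle is not conceptual but is sign and convention book-keeping: the cyclic conventions at the three vertices must be chosen consistently so that the bisecant contributions from each $d$ telescope to $1$ and the tangent contributions assemble in the correct order $f_a(b) f_b(c) f_c(a)$ versus $f_a(c) f_c(b) f_b(a)$. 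Once this is in place the argument is entirely local and self-contained, and the two-point lemma follows from the reduction described in the first paragraph.
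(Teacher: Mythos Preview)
Your proposal is correct and is essentially the same argument as the paper's. The paper works directly with the triangle $\{e,x,y\}$ in explicit coordinates (so the anchor point $e$ is built in from the start), whereas you first prove the general three-point identity for arbitrary $a,b,c$ and then specialize $c=e$; this is only an organizational difference, and the core triangle/product-over-$\mathbb{F}_q^\times$ computation and the final cancellation via the scaling condition are identical.
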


\begin{proof} 
Let $B=\{e,x,y\}$. Since $S$ is an arc, it follows that $B$ is a basis. 

For $a \in S$, let $f_a^*(X)$ be the polynomial we obtain from $f_a(X)$ when we change the basis from the canonical basis to the basis $B$. Then
$$
f_x^*(X)=\prod_{i=1}^t (b_{i1}X_1+b_{i3}X_3),\ \
f_y^*(X)=\prod_{i=1}^t (c_{i1}X_1+c_{i2}X_2),\ \
\mathrm{and} \ \
f_{e}^*(X)=\prod_{i=1}^t (d_{i2}X_2+d_{i3}X_3),
$$ 
for some $b_{ij}, c_{ij}, d_{ij} \in {\mathbb F}_q$.

With respect to the basis $B$, the line joining $e$ and $(s_1,s_2,s_3) \in S \setminus B$ is the kernel of the linear form $X_2-(s_2/s_3)X_3$. Since these lines are distinct from the tangent lines incident with $e$ we have
$$
\prod_{i=1}^t \frac{d_{i3}}{d_{i2}} \prod_{s \in S \setminus B} \frac{-s_2}{s_3}=\prod_{b \in {\mathbb F}_q \setminus \{ 0\}} b = -1.
$$
Observing that $\prod_{i=1}^t d_{i3}=f_{e}^*(y)$ and $\prod_{i=1}^t d_{i2}=f_{e}^*(x)$, this gives
$$
f_{e}^*(y)\prod_{s \in S \setminus B} s_2=(-1)^{t+1}f_{e}^*(x)\prod_{s \in S \setminus B} s_3.
$$
Similarly,
$$
f_{x}^*(e)\prod_{s \in S \setminus B} s_3=(-1)^{t+1}f_{x}^*(y)\prod_{s \in S \setminus B} s_1
$$
and
$$
f_{y}^*(x)\prod_{s \in S \setminus B} s_1=(-1)^{t+1}f_{y}^*(e)\prod_{s \in S \setminus B} s_2.
$$
Combining these three equations we obtain,
$$
f_{e}^*(y)f_{x}^*(e)f_{y}^*(x)=(-1)^{t+1}f_{e}^*(x)f_{x}^*(y)f_{y}^*(e).
$$
Since $f_a$ and $f_a^*$ define the same function on the points of $\mathrm{PG}(2,q)$, we also have that $f_x^*(e)=(-1)^{t+1}f_{e}^*(x)$ and $f_y^*(e)=(-1)^{t+1}f_{e}^*(y)$, and hence $f_x^*(y)=(-1)^{t+1}f_y^*(x)$, and therefore $f_x(y)=(-1)^{t+1}f_y(x)$.
\end{proof}

In 1967 Segre \cite{Segre1967} proved that the set of tangents, viewed as points in the dual plane, lie on an algebraic curve of degree $t$, when $q$ is even, and an algebraic curve of degree $2t$, when $q$ is odd. His main tool in the proof of this theorem was Lemma~\ref{segre}. However, to prove this result for $q$ odd, he only uses the fact that
$$
f_x(y)^2=f_y(x)^2,
$$
whereas Lemma~\ref{segre} maintains that the stronger statement 
$$
f_x(y)=(-1)^{t+1}f_y(x)
$$
holds.

In terms of the polynomials $f_a(X)$, Segre's theorem is the following theorem. Although we will not need this to derive the main results of this article we include a proof here, since it is quite short.

\begin{theorem} \label{segrethm}
Let $m\in \{1,2 \}$ such that $m-1=q$ modulo $2$. If $S$ is a planar arc of size $q+2-t$, where $|S| \geqslant mt+2$, then there is a homogeneous polynomial in three variables $\phi(Z)$, of degree $mt$, which gives a polynomial $G(X,Y)$ under the substitution $Z_1=X_2Y_3-Y_2X_3$, $Z_2=X_1Y_3-Y_1X_3$, $Z_3=X_2Y_1-Y_2X_1$, with the property that for all $a\in S$
$$
G(X,a)=f_a(X)^m.
$$ 
\end{theorem}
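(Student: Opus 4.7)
The plan is by interpolation. I fix $mt+2$ distinct points $a_0,\ldots,a_{mt+1}$ of $S$ (possible by the hypothesis $|S|\geqslant mt+2$), construct a unique polynomial $\phi(Z)$ of degree $mt$ satisfying $\phi(Z(X,a_i))=f_{a_i}(X)^m$ for each $i$, and then verify that the resulting $\phi$ works for every other $a\in S$. The key structural observation is that, for each $a$, the three forms $Z_i(X,a)$ are linear in $X$ and together span the two-dimensional space of linear forms vanishing at $a$; consequently $\phi\mapsto L_a(\phi):=\phi(Z(X,a))$ maps the space of degree-$mt$ polynomials in $Z$ onto the $(mt+1)$-dimensional space of degree-$mt$ polynomials in $X$ that descend to the quotient $V/\langle a\rangle$. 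The target $f_a(X)^m$ lies in this same subspace, since $f_a$ is a product of linear forms vanishing at $a$.

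For the interpolation step, consider $R:\phi\mapsto(L_{a_i}(\phi))_{i=0}^{mt+1}$ from the $\binom{mt+2}{2}$-dimensional space of degree-$mt$ polynomials $\phi(Z)$ to the tuples of $mt+2$ degree-$mt$ polynomials in $X$. Injectivity of $R$ is immediate: if $L_{a_i}(\phi)\equiv 0$ in $X$ for all $i$, then $\phi$ vanishes on each ``pencil line'' $\ell_{a_i}$ in the dual plane that parametrizes the lines through $a_i$, so $\phi$ is divisible by the product of $mt+2$ pairwise coprime linear forms and hence vanishes. A dimension count, combined with the standard fact that evaluations at $mt+1$ distinct points of $\mathrm{PG}(1,q)$ are linearly independent on degree-$mt$ polynomials, identifies the image of $R$ with the compatibility subspace $W=\{(g_i):g_i\in\mathrm{Im}(L_{a_i}),\ g_i(a_j)=g_j(a_i)\text{ for }i\neq j\}$. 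The prescribed data $(f_{a_i}(X)^m)$ lies in $W$ by Lemma~\ref{segre}, since $((-1)^{t+1})^m=1$ in both admissible cases ($m=1$ with $q$ even, and $m=2$). Hence a unique $\phi$ exists realizing the desired values on $\{a_0,\ldots,a_{mt+1}\}$.

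To promote this to every $a\in S$, fix $a\in S\setminus\{a_0,\ldots,a_{mt+1}\}$ and set $D(X)=\phi(Z(X,a))-f_a(X)^m$. Both summands depend on $X$ only modulo $\langle a\rangle$, so $D$ descends to a homogeneous polynomial of degree $mt$ on $\mathbb{P}(V/\langle a\rangle)\cong\mathrm{PG}(1,q)$. Evaluating at $X=a_j$: since $Z(a_j,a)=-Z(a,a_j)$ and $(-1)^{mt}=1$, the first summand equals $\phi(Z(a,a_j))=f_{a_j}(a)^m$ by the interpolation property; Lemma~\ref{segre} then gives $f_{a_j}(a)^m=f_a(a_j)^m$, so $D(a_j)=0$ for every $j$. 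The chords $aa_j$ are $mt+2$ distinct lines through $a$ (arc condition), so the $a_j$'s project to $mt+2$ distinct points of $\mathrm{PG}(1,q)$; a degree-$mt$ polynomial with that many zeros must vanish identically, giving $\phi(Z(X,a))=f_a(X)^m$ as required.

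The main obstacle I anticipate is the dimension check in the interpolation step, namely verifying that the $\binom{mt+2}{2}$ compatibility relations defining $W$ are linearly independent so that $R$ surjects onto $W$ purely on dimensional grounds. Once this is in hand, everything else is clean linear algebra powered by the sign-and-square symmetry supplied by Lemma~\ref{segre}, and the hypothesis $|S|\geqslant mt+2$ is used precisely at both steps: to make the interpolation problem well-posed, and to kill the error polynomial on each quotient line.
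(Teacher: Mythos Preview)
Your approach is correct and follows the same overall arc as the paper's proof: interpolate through a subset $E$ of $mt+2$ points of $S$ using Lemma~\ref{segre} for compatibility, then extend to all of $S$ by noting that on each quotient line the error polynomial has degree $mt$ and at least $mt+1$ zeros. The difference is that the paper writes down the interpolant explicitly as a Lagrange-type sum,
\[
G(X,Y)=\sum_{\{a,b\}\subset E} f_a(b)^m\prod_{u\in E\setminus\{a,b\}}\frac{\det(X,Y,u)}{\det(a,b,u)},
\]
and then checks directly that $G(X,y)=f_y(X)^m$ for $y\in E$ by observing that both sides agree at the $mt+1$ points of $E\setminus\{y\}$ on the quotient line through $y$. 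This sidesteps your flagged obstacle entirely: no dimension count on the compatibility subspace $W$ is needed. That said, your obstacle dissolves immediately using the very fact you already invoke: fixing an index $k$ and setting $g_i=0$ for $i\neq k$, the relations collapse to the vanishing of $g_k$ at the $mt+1$ points $a_j$ with $j\neq k$, and these evaluation functionals are linearly independent on the $(mt+1)$-dimensional space $\mathrm{Im}(L_{a_k})$; hence the $\binom{mt+2}{2}$ relations are independent and $\dim W=\binom{mt+2}{2}$, as you wanted. Your abstract formulation is slightly more bookkeeping but makes the role of the hypothesis $|S|\geqslant mt+2$ transparent at each step, while the paper's explicit formula has the virtue of exhibiting $\phi$ in closed form.
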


\begin{proof} 
Order the set $S$ arbitrarily and let $E$ be a subset of $S$ of size $mt+2$. Define
$$
G(X,Y)=\sum_{a<b} f_{a}(b)^m \prod_{u \in E \setminus \{a,b\}} \frac{\det(X,Y,u)}{\det(a,b,u)}.
$$
where the sum runs over subsets $\{a,b\}$ of $E$.

Observe that  $\det(X,Y,u)=u_1Z_1+u_2Z_2+u_3Z_3$, where $Z_1=X_2Y_3-Y_2X_3$, $Z_2=X_3Y_1-Y_3X_1$ and $Z_3=X_1Y_2-X_2Y_1$, so $G$ can be obtained from a homogeneous polynomial in $Z$ of degree $mt$ under this change of variables.

Then, for $y \in E$, the only non-zero terms in $G(X,y)$ are obtained for $a=y$ and $b=y$. Lemma~\ref{segre} implies
$$
G(X,y)=\sum_{a\in E \setminus {y}} f_{a}(y)^m \prod_{u \in E \setminus \{a,y\}}\frac{\det(X,y,u)}{\det(a,y,u)}.
$$
With respect to a basis containing $y$, the polynomials $G(X,y)$ and $f_y(X)^m$ are homogeneous polynomials in two variables of degree $mt$. Their values at the $mt+1$ points $x \in E \setminus \{y\}$ are the same, so we conclude that $G(X,y)=f_y(X)^m$.

If $y \not\in E$ then we still have that with respect to a basis containing $y$, the polynomial $G(X,y)$ is a homogeneous polynomial in two variables of degree $mt$. For $x \in E$,
$$
G(x,y)=G(y,x)=f_x(y)^m=f_y(x)^m,
$$
the last equality following from Lemma~\ref{segre}, and so again we conclude that $G(X,y)=f_y(X)^m$.
\end{proof}

\section{A tangent polynomial for the arc}

Let $V_r[X]$ denote the vector space of homogeneous polynomials of degree $r$ in ${\mathbb{F}}_q[X_1,X_2,X_3]$.

For a set $D$ of points of $\mathrm{PG}(2,q)$, we define an {\em $r$-socle} of $D$ as follows.
Let $\mathrm{M}_r(D)$ denotes the 
%${r+2 \choose 2} \times |D|$ 
matrix  whose rows are indexed by the elements of $V_r[X]$ and whose columns are indexed by the points of $D$, and where the $(f(X),x)$-entry is $f(x)$. Then an {\em $r$-socle} of $D$ is a subset of 
$D$ whose elements index the columns which form a basis for the column space of $\mathrm{M}_r(D)$.

Let $A$ be an {\em arc} of $\mathrm{PG}(2,q)$ of size $q+2-t$, and let $\Phi_r$ denote the subspace of polynomials of $V_r[X]$ which are zero on $A$.

\begin{lemma} \label{rsocle}
An $r$-socle of $A$ has size ${r+2 \choose 2}-\dim \Phi_r$.
\end{lemma}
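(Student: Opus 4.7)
The plan is to interpret the matrix $\mathrm{M}_r(A)$ as the matrix of the evaluation map and apply rank–nullity.

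First I would set up the evaluation map
$$
\mathrm{ev}: V_r[X] \to {\mathbb F}_q^{A}, \qquad f \mapsto (f(x))_{x \in A}.
$$
Writing this map in the basis of $V_r[X]$ given by the monomials of degree $r$ in $X_1,X_2,X_3$ (of which there are $\binom{r+2}{2}$) and the standard basis of ${\mathbb F}_q^A$, one sees that its matrix is exactly the transpose of $\mathrm{M}_r(A)$. In particular the column space of $\mathrm{M}_r(A)$ corresponds to the row space of this matrix, and both have dimension equal to the rank of $\mathrm{ev}$.

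Next I would identify kernel and image. By definition $\Phi_r$ is the subspace of $V_r[X]$ consisting of polynomials that vanish on every point of $A$, so $\Phi_r = \ker(\mathrm{ev})$. By rank–nullity,
$$
\mathrm{rank}(\mathrm{ev}) = \dim V_r[X] - \dim \Phi_r = \binom{r+2}{2} - \dim \Phi_r.
$$

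Finally, an $r$-socle of $A$ is by definition a subset of $A$ whose indexed columns form a basis of the column space of $\mathrm{M}_r(A)$, so its size equals the dimension of that column space, namely $\mathrm{rank}(\mathrm{M}_r(A)) = \mathrm{rank}(\mathrm{ev})$. Combining with the display above yields the claimed formula $\binom{r+2}{2} - \dim \Phi_r$.

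There is essentially no obstacle: the only thing to check carefully is that evaluation at a point $x \in \mathrm{PG}(2,q)$ makes sense on homogeneous polynomials (it does up to the choice of representative, but this does not affect which columns are linearly independent, since rescaling a representative of $x$ just rescales the corresponding column of $\mathrm{M}_r(A)$ and does not change its rank or the notion of a column basis).
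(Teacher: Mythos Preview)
Your argument is correct and is exactly the paper's approach, just spelled out in more detail: the paper's proof simply states that the size of an $r$-socle equals the rank of $\mathrm{M}_r(A)$, which is $\binom{r+2}{2}-\dim\Phi_r$, and your use of rank--nullity for the evaluation map is precisely what justifies this last equality.
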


\begin{proof}
By definition, the size of an $r$-socle of $A$ is equal to the rank of the matrix $\mathrm{M}_r(A)$, which is ${t+2 \choose 2} -\dim \Phi_r$. 
\end{proof}

\begin{lemma} \label{tplusrsocle}
Let $S_0$ be an $t$-socle of $A$. For all positive integers $r$, there is a $(t+r)$-socle $S_r$ of $A$ containing a $(t+j)$-socle $S_j$, for all $j=0,\ldots,r-1$. Moreover, 
$$
|S_r\setminus S_0| \leqslant r(t+\tfrac{1}{2}r+\tfrac{3}{2}).
$$
\end{lemma}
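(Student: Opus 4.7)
The plan is to build the ascending chain $S_0 \subseteq S_1 \subseteq \cdots \subseteq S_r$ by induction on $r$: given a $(t+j-1)$-socle $S_{j-1}$, one extends it to a $(t+j)$-socle $S_j$ by adjoining points of $A$. This will be legitimate provided the columns of $\mathrm{M}_{t+j}(A)$ indexed by $S_{j-1}$ are still linearly independent, in which case one can complete $S_{j-1}$ to a basis of the column space of $\mathrm{M}_{t+j}(A)$ by adjoining further columns of $\mathrm{M}_{t+j}(A)$ (which are indexed by points of $A$) and take $S_j$ to be the resulting index set.

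The key step, and the one I expect to be the main (if mild) obstacle, is verifying this preservation of linear independence as the degree increases by one. Assume that $\sum_{x \in S_{j-1}} c_x f(x) = 0$ for every $f \in V_{t+j}[X]$. Specialising to $f = X_i g$ with $g \in V_{t+j-1}[X]$ and $i \in \{1,2,3\}$ yields $\sum_{x} c_x x_i g(x) = 0$ for all $g \in V_{t+j-1}[X]$. Because $S_{j-1}$ is a $(t+j-1)$-socle, the evaluation functionals $g \mapsto g(x)$ for $x \in S_{j-1}$ are linearly independent on $V_{t+j-1}[X]$, so $c_x x_i = 0$ for every $x$ and every $i$. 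Since each $x = (x_1, x_2, x_3)$ represents a point of $\mathrm{PG}(2,q)$ and hence has at least one nonzero coordinate, this forces $c_x = 0$.

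For the size bound I would combine Lemma~\ref{rsocle} with a multiplication-by-linear-form argument. Applying Lemma~\ref{rsocle} to $S_r$ and $S_0$ and using $S_0 \subseteq S_r$ (which follows by transitivity from the chain constructed above) gives
$$|S_r \setminus S_0| = \binom{t+r+2}{2} - \binom{t+2}{2} + \dim \Phi_t - \dim \Phi_{t+r}.$$
The binomial difference simplifies to $r(t + \tfrac{1}{2}r + \tfrac{3}{2})$, so it remains to check $\dim \Phi_{t+r} \geq \dim \Phi_t$. This is immediate: fix any nonzero linear form $\ell$; the map $\Phi_t \to \Phi_{t+r}$ sending $f \mapsto \ell^r f$ is well defined (since $\ell^r f$ vanishes on $A$ whenever $f$ does) and injective (since $\mathbb{F}_q[X_1,X_2,X_3]$ is an integral domain). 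The desired inequality follows at once.
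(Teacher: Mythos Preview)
Your proof is correct and follows essentially the same approach as the paper: build the chain of socles inductively by extending $S_{j-1}$ to $S_j$, then bound $|S_r\setminus S_0|$ via Lemma~\ref{rsocle} and the inequality $\dim\Phi_{t+r}\geqslant\dim\Phi_t$ obtained by multiplying elements of $\Phi_t$ by a fixed degree-$r$ form. In fact you supply more detail than the paper does, since the paper simply asserts that the columns of $\mathrm{M}_{t+j+1}(A)$ indexed by $S_j$ remain linearly independent, whereas you prove it via the $f=X_ig$ specialisation.
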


\begin{proof}
If $S_j$ is a $(t+j)$-socle of $A$, then the
elements of $S_j$ index a basis for the column space of $\mathrm{M}_{t+j}(A)$. 
Moreover, the set of columns of $\mathrm{M}_{t+j+1}(A)$ indexed by $S_j$ are linearly independent
vectors in the column space of $\mathrm{M}_{t+j+1}(A)$. Hence, $S_j$ can be extended to a $(t+j+1)$-socle $S_{j+1}$.

If $B$ is a basis for $\Phi_r$ then $\{X_1^r \phi \ | \ \phi \in B\}$ are linearly independent polynomials in $\Phi_{t+r}$. Hence, 
$$
\dim \Phi_{t+r} \geqslant \dim\Phi_t,
$$
which, by Lemma~\ref{rsocle}, implies
$$
{t+r+2 \choose 2} - |S_r| \geqslant {t+2 \choose 2} - |S_0|.
$$
\end{proof}

Note that in the following theorem, Theorem~\ref{main}, if $A$ is not contained in a curve of degree $t$ then $\Phi_t=\{0\}$ and so we conclude that there is a polynomial $F(X,Y)$ with the property that 
$$
F(X,y)=f_y(X),
$$
for all $y \in A$.

A polynomial in ${\mathbb{F}}_q[X,Y]$ is called a {\em $(t,t)$-form} if it is simultaneously homogeneous of degree $t$ in both sets of variables
$X=(X_1,X_2,X_3)$ and $Y=(Y_1,Y_2,Y_3)$.

\begin{theorem} \label{main}
{\rm
Let $A$ be an arc of size $q+2-t$ of $\mathrm{PG}(2,q)$. For any subset $S$ of $A$ containing a $t$-socle $S_0$ of $A$, where 
$$
|S \setminus S_0|= \min\{ \tfrac{1}{2}(t+2)(t+1),|A \setminus S_0| \},
$$
there is a $(t,t)$-form $F(X,Y)\in {\mathbb{F}}_q[X,Y]$ such that
$$
F(X,y)=f_y(X) \pmod {\Phi_t},
$$
for all $y \in A$ and 
$$
F(X,y)=f_y(X),
$$
for all $ y\in S$.}
\end{theorem}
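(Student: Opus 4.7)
The plan is to imitate a Lagrange-type interpolation, combined with Segre's lemma of tangents (Lemma~\ref{segre}), to produce a $(t,t)$-form whose evaluations recover $f_y$ modulo $\Phi_t$ on all of $A$ and exactly on the enlarged subset $S$.

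First I would fix Lagrange polynomials $\{\ell_y(Y) \in V_t[Y] : y \in S_0\}$ dual to the evaluations on the $t$-socle $S_0$, so that $\ell_y(y') = \delta_{y, y'}$ for $y, y' \in S_0$. These exist because the evaluation map $V_t[Y] \to \mathbb{F}_q^{S_0}$ is surjective, which is the defining property of a $t$-socle. Then set
\[
F_0(X, Y) := \sum_{y \in S_0} f_y(X)\, \ell_y(Y),
\]
which is a $(t, t)$-form by construction.

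To verify $F_0(X, y) \equiv f_y(X) \pmod{\Phi_t}$ for all $y \in A$, evaluate at an arbitrary $a \in A$ and apply Segre's lemma $f_{y'}(a) = (-1)^{t+1} f_a(y')$ to obtain
\[
F_0(a, y) = (-1)^{t+1} \sum_{y' \in S_0} f_a(y')\, \ell_{y'}(y).
\]
The $t$-socle property implies that for every $G \in V_t[Y]$, the polynomial $G(Y) - \sum_{y' \in S_0} G(y')\, \ell_{y'}(Y)$ vanishes on $S_0$ and hence lies in $\Phi_t$, so it vanishes on all of $A$. Taking $G = f_a$ and applying Segre once more gives $F_0(a, y) = (-1)^{t+1} f_a(y) = f_y(a)$. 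Thus $F_0(X, y) - f_y(X)$ vanishes on $A$, so it lies in $\Phi_t$, yielding the congruence on $A$ and equality on $S_0$.

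Upgrading the congruence to an exact equality on $S$ is the main obstacle. For $y \in S$ set $\psi_y := f_y(X) - F_0(X, y) \in \Phi_t$; this vanishes on $S_0$ and is a specific element of $\Phi_t$ for each $y \in S \setminus S_0$. I would construct a correction $(t, t)$-form $H(X, Y)$ satisfying $H(X, y) \in \Phi_t$ for all $y \in A$, $H(X, y) = 0$ for $y \in S_0$, and $H(X, y) = \psi_y$ for $y \in S \setminus S_0$, and then set $F := F_0 + H$. Existence of $H$ reduces to a linear-algebra problem in $V_t[X] \otimes V_t[Y]$, which becomes tractable because $|S \setminus S_0| \leq \tfrac{1}{2}(t+2)(t+1) = \dim V_t[X]$; this size bound is precisely the threshold that allows the evaluation constraints at $S \setminus S_0$ to be satisfied while respecting the rigid restrictions imposed by $\Phi_t$ (namely, polynomials in $V_t[Y]$ vanishing on $S_0$ necessarily vanish on all of $A$). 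I expect this consistency argument, showing that the actual tuple $(\psi_y)_{y \in S \setminus S_0}$ coming from $F_0$ lies in the image of the appropriate evaluation map, to be the most delicate step, and to require a further application of Segre's lemma together with the structure of $\Phi_t$.
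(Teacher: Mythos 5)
Your construction of $F_0(X,Y)=\sum_{y'\in S_0}f_{y'}(X)\ell_{y'}(Y)$ and the verification that $F_0(X,y)\equiv f_y(X)\pmod{\Phi_t}$ for all $y\in A$ (with equality on $S_0$) is correct, and is in fact a cleaner, explicit route to the first assertion than the paper's dimension count. In particular, when $\Phi_t=\{0\}$ your argument already proves the whole theorem. But in the essential case $\Phi_t\neq\{0\}$, the correction step you defer is not merely delicate --- it is infeasible as posed. Any $(t,t)$-form $H$ with $H(x,y)=0$ for all $(x,y)\in A\times A$ lies in $\Phi_t[X]\otimes V_t[Y]+V_t[X]\otimes\Phi_t[Y]$ (the kernel of a tensor product of linear maps is the sum of the two obvious subspaces). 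The second summand evaluates to $0$ at every $y\in A$, so $H(X,y)=\sum_i g_i(y)\phi_i(X)$ for $y\in A$, with $\{\phi_i\}$ a basis of $\Phi_t[X]$ and $g_i\in V_t[Y]$. Your condition $H(X,y)=0$ for $y\in S_0$ forces each $g_i$ to vanish on $S_0$, hence on all of $A$ by the $t$-socle property --- the very rigidity you yourself note. So every admissible $H$ satisfies $H(X,y)=0$ for all $y\in A$, and $F_0+H$ is exact on $S\setminus S_0$ only if $F_0$ already was, which there is no reason to expect. The bound $|S\setminus S_0|\leqslant\frac{1}{2}(t+2)(t+1)$ cannot rescue a system whose only solution is trivial.

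The missing idea is that the extra linear conditions pinning down $F(X,y)$ for $y\in S\setminus S_0$ must be imposed at points \emph{off the arc}. The paper takes, for each such $y$, a set $T(y)\subset\mathrm{Tan}(y)$ of $\dim\Phi_t$ points on the tangent lines through $y$ such that $S_0\cup T(y)$ is a $t$-socle of all of $\mathrm{PG}(2,q)$ (possible because $V(f_y)$ is the unique degree-$t$ curve through $\mathrm{Tan}(y)$ and avoids $S_0$), and demands $F(z,y)=0$ for $z\in T(y)$. Then $F(X,y)-f_y(X)$ lies in $\Phi_t$ and also vanishes on $T(y)$, hence vanishes on a $t$-socle of the whole plane and is identically zero. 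This is carried out not as a correction to an explicit $F_0$ but as a single homogeneous linear system on the $\binom{t+2}{2}^2$ coefficients of an undetermined $F$; your hypothesis on $|S\setminus S_0|$ enters exactly in showing the total number of conditions is at most $\binom{t+2}{2}^2-1$, so a nonzero solution exists, which is then normalised at a base point $e$. To salvage your write-up you would need to replace your correction problem by one allowing evaluation constraints on $\mathrm{Tan}(y)$, at which point you are essentially reproducing the paper's argument.
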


\begin{proof}

Let $F(X,Y)$ be a $(t,t)$-form $F(X,Y)\in {\mathbb{F}}_q[X,Y]$ whose ${t +2 \choose 2}^2$ coefficients will be determined as the solution of a homogeneous system of equations.

For a point $y$ of $A$, let $\mathrm{Tan}(y)$ be the set of points on the tangents to $A$ incident with $y$.

Let $e$ be a point of $S_0$.

Let $T(e)$ be a $t$-socle for $\mathrm{Tan}(e)$. Then $T(e)$ is a set of ${t+2 \choose 2}-1$ points, since $V(f_e)$ is the unique curve of degree $t$ containing $\mathrm{Tan}(e)$. 

Impose the 
\begin{eqnarray}\label{eqn:conditions1}
{t+2 \choose 2}-1
\end{eqnarray}
conditions $F(z,e)=0$ for all $z \in T(e)$. Then $F(X,e)$ is a multiple of $f_e(X)$. Now scale $F(X,Y)$ so that
$$
F(X,e)=f_e(X).
$$
For each $y \in S_0 \setminus \{ e \}$, let $T(y)$ be a $t$-socle of $\mathrm{Tan}(y)$, which again is set of ${t+2 \choose 2}-1$ points.

For $y \in S_0 \setminus \{ e \}$, impose the condition $F(z,y)=0$ for all $z \in T(y)$ and the condition 
$$F(y,e)-(-1)^{t+1}F(e,y)=0.$$ This amounts to
\begin{eqnarray}\label{eqn:conditions2}
(|S_0|-1){t+2 \choose 2}=({t+2 \choose 2}-\dim\Phi_t-1){t+2 \choose 2}
\end{eqnarray}
conditions. 
Then, since
$$
F(e,y)=(-1)^{t+1}F(y,e)=(-1)^{t+1}f_e(y)=f_y(e),
$$
the homogeneous polynomial
$$F(X,y)-f_y(X)
$$
defines a curve of degree $t$ which is zero at all the points of $T(y)\cup \{ e\}$. By the definition of $T(y)$ it is a multiple of $f_y(X)$, but since $f_y(e) \neq 0$, it is zero. Hence
$$
F(X,y)=f_y(X),
$$
for all $y \in S_0$. This implies that for all $x,y\in S_0$, 
$$
F(x,y)=f_y(x)=(-1)^{t+1}f_x(y)=(-1)^{t+1}F(y,x).
$$
Then for each $y\in S_0$, the  curve of degree $t$ defined by
$$
F(X,y)-(-1)^{t+1}F(y,X)
$$
contains $S_0$. Since $S_0$ is a $t$-socle of $A$, it contains $A$, i.e.
$$
F(x,y)=(-1)^{t+1}F(y,x),
$$
for all $(x,y) \in (S_0\times A) \cup (A\times S_0)$.

This implies that, for $y \in A$ and $x \in S_0$,
$$
F(y,x)=(-1)^{t+1}F(x,y)=(-1)^{t+1}f_x(y)=f_y(x).
$$
Now consider 
$$
F(X,y)-(-1)^{t+1}F(y,X)
$$
for $y \in A$. Again, this defines a curve of degree $t$ which contains $S_0$ and so contains $A$. 
Thus,
$$
F(x,y)-(-1)^{t+1}F(y,x)=0,
$$
for all $x,y\in A$.

Since for $y \in A$ and $x \in S_0$
$$
F(x,y)=(-1)^{t+1}F(y,x)=(-1)^{t+1}f_x(y)=f_y(x),
$$
the polynomial
$$
F(X,y)-f_y(X)
$$
defines a curve of degree $t$ which is zero on $S_0$, and therefore zero on $A$, i.e. it belongs to $\Phi_t$. Hence, 
$$
F(X,y)=f_y(X) \pmod{\Phi_t}.
$$

For $y \in S \setminus S_0$, let $T(y)$ be a subset of $\mathrm{Tan}(y)$ such that $S_0 \cup T(y)$ is  $t$-socle for $\mathrm{PG}(2,q)$. Such a set exists since the only curve of degree $t$ which contains $\mathrm{Tan}(y)$ is $V(f_y)$, which does not contain any of the points in $S_0$. A $t$-socle for $\mathrm{PG}(2,q)$ has ${t+2 \choose 2}$ points, so $|T(y)|=\dim \Phi_t$.

For each $y \in S \setminus S_0$ and $z\in T(y)$, impose the condition $F(z,y)=0$. This amounts to 
\begin{eqnarray}\label{eqn:conditions3}
|S\setminus S_0|\dim\Phi_t\leq {t+2 \choose 2}\dim \Phi_t
\end{eqnarray}
further conditions. Adding the number of conditions in (\ref{eqn:conditions1}), (\ref{eqn:conditions2}), (\ref{eqn:conditions3}), we have
now imposed at most 
$$
{t+2 \choose 2}^2-1
$$
conditions. The curve of degree $t$
$$
F(X,y)-f_y(X)
$$
is zero at all the points of $T(y)\cup S_0$, so it is zero. Hence, for all $y\in S$,
$$
F(X,y)=f_y(X),
$$
which completes the proof.
\end{proof}

%The remainder of the article is then dedicated to proving Theorem~\ref{twocurves}.

\section{Some examples.}

%Before we go on and prove Theorem~\ref{main}, we first consider some examples, which not only give an idea of what Theorem~\ref{main} is saying, but also give a first indication of how useful it might be.

\begin{example} \label{tis1}
\rm{
If $t=1$ then clearly $\Phi_t=\{0\}$ and Theorem~\ref{main} implies that the polynomial $F(X,Y)$ is a symmetric bilinear form.

If $F(X,X)=0$ then $F(X,Y)$ is also an alternating bilinear form, so $q$ is even in this case. The polynomial $F(X,Y)$ is equal to $\det(X,Y,z)$ for some point $z$, which is incident with all the tangents and is called the {\em nucleus}. The arc $S$ can be extended to an arc of size $q+2$ by adjoining the nucleus.

If $F(X,X) \neq  0$ then Theorem~\ref{main} implies that the points of $S$ are the zeros of a conic, since $f_x(x)=0$ for all $x \in S$. Thus we have that if $q$ is odd then an arc of size $q+1$ is a conic, which is Segre's theorem from \cite{Segre1955a}.
}
\end{example}

\begin{example} \label{12arc}
\rm{The planar arc of 12 points in $\mathrm{PG}(2,13)$,
$$
A=\{ ( 3,4,1),(-3,4,1),(3,-4,1),(-3,-4,1),(4,3,1),(4,-3,1),(-4,3,1),(-4,-3,1),
$$
$$
(1,1,1),(1,-1,1),(-1,1,1),(-1,-1,1) \}
$$
is an arc with $t=3$ and it is not contained in a curve of degree $3$. Consequently, Theorem~\ref{main} implies that there is a $(3,3)$-form $F(X,Y)$ with the property that $F(X,a)=f_a(X)$ for all $a \in A$. It is given by
$$
F(x,y)=5(x_2^2x_3y_1^2y_3+y_2^2y_3x_1^2x_3+x_2x_3^2y_1^2y_2+x_1^2x_2y_2y_3^2+x_1x_3^2y_1y_2^2+x_1x_2^2y_1y_3^2)$$
$$
+6 x_1x_2x_3y_1y_2y_3+x_1^3y_1^3+x_2^3y_2^3+x_3^3y_3^3.
$$
Theorem~\ref{twocurves} implies that this example is contained in the intersection of two quartic curves which do not share a common component. As observed in \cite{AHK1994}, $A$ lies on the intersection of the three quartic curves $x_3^4=x_1^2x_2^2$, $x_2^4=x_1^2x_3^2$ and $x_1^4=x_3^2x_2^2$, .}
\end{example}

\begin{example} \label{10arc}
\rm{The planar arc of 10 points in $\mathrm{PG}(2,11)$,
$$
A=\{ ( 1, 0, 0 ), ( 0, 1, 0 ), 
  ( 0, 0, 1), (1, 8, 7 ), 
  ( 1, 5, 2 ), ( 1, 3, 8 ), 
  ( 1, 4, 1 ), ( 1, 7, 6 ), 
  (1, 9, 4 ), ( 1, 10, 5 )\}
 $$
is an arc with $t=3$ and it is not contained in a curve of degree $3$. Consequently, Theorem~\ref{main} implies that there is a $(3,3)$-form $F(X,Y)$ with the property that $F(X,a)=f_a(X)$ for all $a \in A$. In this case $F$ has 59 terms and is given by
 $$
F(x,y)=
9   x_3^3  y_2^3 
+7   x_3^3 y_1 y_2^2 
+4   x_3^3 y_1^2 y_2 
+2   x_3^3 y_1^3  
+5  x_2 x_3^2  y_2 y_3^2
+4  x_2 x_3^2 y_1  y_3^2
+ x_2 x_3^2 y_1 y_2 y_3
$$
$$
+9  x_2 x_3^2 y_1 y_2^2 
+2  x_2 x_3^2 y_1^2  y_3
+8  x_2 x_3^2 y_1^2 y_2 
+5  x_2^2 x_3  y_2^2 y_3
+9  x_2^2 x_3 y_1  y_3^2
+4  x_2^2 x_3 y_1 y_2 y_3
+8  x_2^2 x_3 y_1 y_2^2 
$$
$$
+5  x_2^2 x_3 y_1^2  y_3
+ x_2^2 x_3 y_1^2 y_2 
+8  x_2^2 x_3 y_1^3  
+9  x_2^3    y_3^3
+3  x_2^3  y_1^2  y_3
+  x_2^3  y_1^3  
+4 x_1  x_3^2  y_2 y_3^2
+9 x_1  x_3^2  y_2^2 y_3
$$
$$
+5 x_1  x_3^2 y_1  y_3^2
+10 x_1  x_3^2 y_1 y_2 y_3
+8 x_1  x_3^2 y_1 y_2^2 
+9 x_1  x_3^2 y_1^2 y_2 
+ x_1 x_2 x_3  y_2 y_3^2
+4 x_1 x_2 x_3  y_2^2 y_3
+10 x_1 x_2 x_3 y_1  y_3^2
$$
$$
+7 x_1 x_2 x_3 y_1 y_2 y_3
+8 x_1 x_2 x_3 y_1 y_2^2 
+4 x_1 x_2 x_3 y_1^2  y_3
+3 x_1 x_2 x_3 y_1^2 y_2 
+7 x_1 x_2^2    y_3^3
+9 x_1 x_2^2   y_2 y_3^2
$$
$$
+8 x_1 x_2^2   y_2^2 y_3
+8 x_1 x_2^2  y_1  y_3^2
+8 x_1 x_2^2  y_1 y_2 y_3
+8 x_1 x_2^2  y_1 y_2^2 
+10 x_1 x_2^2  y_1^2  y_3
+ x_1 x_2^2  y_1^2 y_2 
$$
$$
+2 x_1^2  x_3  y_2 y_3^2
+5 x_1^2  x_3  y_2^2 y_3
+3 x_1^2  x_3  y_2^3 
+4 x_1^2  x_3 y_1 y_2 y_3
+10 x_1^2  x_3 y_1 y_2^2 
+5 x_1^2  x_3 y_1^2  y_3
$$
$$
+3 x_1^2  x_3 y_1^2 y_2 
+4 x_1^2 x_2    y_3^3
+8 x_1^2 x_2   y_2 y_3^2
+ x_1^2 x_2   y_2^2 y_3
+9 x_1^2 x_2  y_1  y_3^2
+3 x_1^2 x_2  y_1 y_2 y_3
$$
$$
+ x_1^2 x_2  y_1 y_2^2 
+3 x_1^2 x_2  y_1^2  y_3
+8 x_1^2 x_2  y_1^2 y_2 
+2 x_1^3     y_3^3
+8 x_1^3    y_2^2 y_3
+ x_1^3    y_2^3.
$$
}
\end{example}

\begin{example} \label{24arc}
\rm{The $24$ points which form the zero set of the polynomial
$$
h(x)=x_1^3x_2+x_2^3x_3+x_3^3x_1
$$
is a planar arc of $\mathrm{PG}(2,29)$ with $t=7$. The dimension of the space of homogeneous polynomials of degree $t$ whose zero set contains the 24 points of the arc is $13$. The subspace of this space which are polynomials which are multiples of $h$ has dimension $10$, so there is a polynomial of degree $7$ which is zero on the arc and is co-prime to $h$. Observe that Theorem~\ref{twocurves}, and the argument in the proof of Theorem~\ref{boundeddeg}, ensure that there is a polynomial of degree $8$ which is zero on the arc and is co-prime to $h$. Theorem~\ref{boundeddeg} implies that an arc of $\mathrm{PG}(2,29)$ contained in a quartic curve has at most $25$ points.

%In this case $F$ is unique modulo $(\Phi[X], \Phi[Y])$ and an example with 136 terms can be found in the Appendix.

%This example comes close to the upper bound of $25$ given by Theorem~\ref{twocurves}  for the size of a planar arc in $\mathrm{PG}_2({\mathbb F}_{29})$ contained in a quartic curve.
}
\end{example}

\section{Arcs in odd characteristic are contained in low degree curves.}

In this section we investigate some consequences of Theorem~\ref{main}.

The following lemma will be used in the proof of Lemma~\ref{almost}. It is a rather technical lemma and can be skipped on first reading.

\begin{lemma} \label{tech}
Let $t$ be a positive integer and $\epsilon=\lfloor \log_p t \rfloor$. Suppose that for all $k \in \{1,\ldots,p^{\epsilon}\}$ and for all $i \in \{0,\ldots,t-k\}$,
$$
\sum_{d=i}^{i+k} {d \choose i}{t-d \choose t-i-k} g_d=0, 
$$
for some $g_0,\ldots,g_t$ in a polynomial ring over ${\mathbb F}_q$.
Then $$g_t+(-1)^{t+1}g_0=0.$$
\end{lemma}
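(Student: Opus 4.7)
The plan is to reinterpret the hypothesis as a divisibility condition on the polynomial $G(V,U) := \sum_{d=0}^t g_d V^{t-d} U^d$ under a ``diagonal shift''. Expanding $(V+s)^{t-d}(U+s)^d$ by the binomial theorem and comparing coefficients, one checks that, for each $k\in\{1,\ldots,p^\epsilon\}$ and $i\in\{0,\ldots,t-k\}$, the coefficient of $s^k V^{t-i-k} U^i$ in $G(V+s,U+s)$ is exactly
$$\sum_{d=i}^{i+k}\binom{d}{i}\binom{t-d}{t-i-k}g_d.$$
Thus the hypothesis says precisely that the $s^n$-coefficient of $G(V+s,U+s)$ vanishes for every $n=1,\ldots,p^\epsilon$; equivalently, $G(V+s,U+s) - G(V,U)$ is divisible by $s^{p^\epsilon+1}$ in $\mathbb{F}_q[V,U,s]$.

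Next, I would change variables by setting $W:=V-U$ and $\tilde G(V,W):=G(V,V-W)$, so that $\tilde G$ is again a homogeneous polynomial of degree $t$, now in $(V,W)$. Since $U=V-W$ gives $U+s=(V+s)-W$, we have $G(V+s,U+s) = \tilde G(V+s,W)$, and the divisibility above becomes $\tilde G(V+s,W) \equiv \tilde G(V,W) \pmod{s^{p^\epsilon+1}}$. Writing $\tilde G(V,W)=\sum_{k=0}^{t} h_k V^k W^{t-k}$ and expanding $(V+s)^k=\sum_n \binom{k}{n}V^{k-n}s^n$, the coefficient of $s^n$ in $\tilde G(V+s,W)-\tilde G(V,W)$ is $\sum_k h_k\binom{k}{n}V^{k-n}W^{t-k}$. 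Setting this equal to zero for every $n\in\{1,\ldots,p^\epsilon\}$ forces $h_k\binom{k}{n}=0$ for all such $n$ and every $k\in\{0,\ldots,t\}$.

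The key arithmetic step is then via Lucas' theorem. For each $k\in\{1,\ldots,t\}$, since $1\leq k\leq t<p^{\epsilon+1}$, the lowest nonzero $p$-adic digit of $k$ occurs at some position $a\in\{0,\ldots,\epsilon\}$; taking $n:=p^a\in\{1,\ldots,p^\epsilon\}$, Lucas gives $\binom{k}{p^a}\not\equiv 0\pmod p$. Hence $h_k=0$ for every $k\geq 1$, so $\tilde G(V,W)=h_0 W^t$ and therefore
$$G(V,U) = h_0(V-U)^t = \sum_{d=0}^{t} (-1)^d\binom{t}{d}h_0\,V^{t-d}U^d.$$
Comparing coefficients gives $g_d = (-1)^d\binom{t}{d}h_0$, so $g_0=h_0$ and $g_t=(-1)^t h_0$, whence $g_t+(-1)^{t+1}g_0 = (-1)^t h_0 + (-1)^{t+1}h_0 = 0$, as required.

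The main obstacle I anticipate is the first step: recognising that the two-parameter family of hypotheses is simply the vanishing of the $s$-Taylor expansion of $G$ along the diagonal direction $(1,1)$. Once that reformulation is in hand, the proof is short and works uniformly in every characteristic (including $p=2$), since the change of variables $(V,U)\mapsto(V,V-U)$ is invertible over any field and no division by integers is ever required.
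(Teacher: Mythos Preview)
Your argument is correct, and it is genuinely different from the paper's. The paper proceeds by direct induction on $s$ to establish the two-term relations
\[
\binom{t-i}{s} g_i + (-1)^{s+1}\binom{i+s}{i} g_{i+s}=0
\]
for $1\le s\le p^{\epsilon}$, and then specialises to $s=p^{\epsilon}$ with $i=t-(k+1)p^{\epsilon}$, using Lucas' theorem to chain $g_t$ to $g_{t-t_{\epsilon}p^{\epsilon}}$ and finally to $g_0$. Your approach instead packages the $g_d$'s into the generating polynomial $G(V,U)=\sum_d g_d V^{t-d}U^d$, recognises the hypotheses as the vanishing of the $s^1,\ldots,s^{p^{\epsilon}}$ coefficients of $G(V+s,U+s)$, and after the shear $(V,U)\mapsto(V,V-W)$ reduces to the statement that $\binom{k}{n}h_k=0$ for all $1\le n\le p^{\epsilon}$. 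The Lucas step then kills every $h_k$ with $1\le k\le t$ at once, since each such $k<p^{\epsilon+1}$ has a nonzero $p$-adic digit in some position $a\le\epsilon$ and hence $\binom{k}{p^a}\not\equiv 0\pmod p$.

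Your route is cleaner and yields more: it shows $G(V,U)=g_0(V-U)^t$, i.e.\ $g_d=(-1)^d\binom{t}{d}g_0$ for every $d$, not merely the relation $g_t+(-1)^{t+1}g_0=0$. The paper's induction, by contrast, only needs the hypotheses with $i_3=0$ (in the later application) and tracks just enough to link the two endpoints; what it gains is that one sees explicitly which of the original equations are actually used. Both proofs hinge on the same arithmetic fact---that $p^{\epsilon}\le t<p^{\epsilon+1}$ makes Lucas applicable to every relevant index---but yours makes the role of that fact more transparent.
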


\begin{proof}
Let $t=t_0+t_1p+\cdots+t_{\epsilon}p^{\epsilon}$ be the $p$-ary expansion of $t$.

First, we prove by induction that 
$$
{t-i \choose s} g_i+ (-1)^{s+1} {i+s \choose i} g_{i+s}=0,
$$
for all $s \in \{ 1, \ldots, p^{\epsilon} \}$ and for all $i \in \{0,\ldots, t-s\}$.

For $s=1$ this is the given equation with $k=1$. 

Assume the equation holds for $j \leqslant s-1$. Then
$$
{i+s \choose i}g_{i+s}=-\sum_{d=i}^{i+s-1} {d \choose i}{t-d \choose t-i-s} g_d=-\sum_{j=0}^{s-1} {i+j \choose i}{t-i-j \choose t-i-s}g_{i+j},
$$
which, by induction, is equal to
$$
-\sum_{j=0}^{s-1} (-1)^j {t-i \choose j}{t-i-j \choose t-i-s}g_{i}=-\sum_{j=0}^{s-1} (-1)^j {s \choose j}{t-i \choose s}g_i= (-1)^s{t-i \choose s}g_i,
$$
as required.

For $s=p^{\epsilon}$ and $i=t-(k+1)p^{\epsilon}$, where $k=0,\ldots,t_{\epsilon}-1$, this gives
$$
(k+1)g_{t-(k+1)p^{\epsilon}}+(t_{\epsilon}-k)g_{t-kp^{\epsilon}}=0,
$$
by Lucas' theorem.

Combining these equations we have that
$$
g_{t-t_{\epsilon}p^{\epsilon}}=(-1)^{t_{\epsilon}}g_t.
$$

For $s=t-t_{\epsilon}p^{\epsilon}$ and $i=0$ we obtain
$$
g_0+(-1)^{t-t_{\epsilon}p^{\epsilon}+1}g_{t-t_{\epsilon}p^{\epsilon}}=0.
$$
Eliminating $g_{t-t_{\epsilon}p^{\epsilon}}$ from these two equations, the lemma follows.
\end{proof}

We say that a polynomial $\phi\in {\mathbb{F}}_q[Z_1,X_2,X_3]$ is {\em hyperbolic on an arc $A$} in ${\mathrm{PG}}(2,q)$, if it has the property that if the kernel of a linear form $\gamma$ is a bisecant to $A$ then $\phi$ modulo $\gamma$ factorises into at most two linear factors whose multiplicities sum to the degree of $\phi$ and which are zero at the points of $A$ on the bisecant.

\begin{lemma} \label{almost}
Let $A$ be a planar arc of size $q+2-t$, $q$ odd,  and let $F(X,Y)$ be a $(t,t)$ form given by Theorem~\ref{main}.
If $A$ has a subset $S$ which contains a $(t+r)$-socle, for all $r=0,\ldots,p^{\lfloor \log_p t \rfloor}$, for which
$$
F(X,y)=f_y(X)
$$
for all $y \in S$ or, there is no curve of degree $t$ containing $A$, in which case $S=A$, then at least one of the following holds:
\begin{enumerate}
\item[(i)] there is a homogeneous polynomial of degree at most $t+p^{\lfloor \log_p t \rfloor}$ which is hyperbolic on $S$;
\item[(ii)] there are two co-prime homogeneous polynomials of degree at most $t+p^{\lfloor \log_p t \rfloor}$ which are zero on $A$.
\end{enumerate}
\end{lemma}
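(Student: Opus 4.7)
The plan is to restrict $F(X,Y)$ to a bisecant $\ell$ through two points $x,y\in S$, parametrise $\ell$ by $X=\lambda' x+\mu' y$ and $Y=\lambda x+\mu y$, and exploit the rigidity of the resulting bihomogeneous form
$$G(\lambda',\mu',\lambda,\mu)=F(\lambda' x+\mu' y,\lambda x+\mu y).$$
The identities $F(X,x)=f_x(X)$ and $F(X,y)=f_y(X)$, together with the elementary computations $f_x|_\ell=f_x(y)(\mu')^t$ and $f_y|_\ell=f_y(x)(\lambda')^t$, pin down all coefficients $a_{i,d}$ of $G$ with $d\in\{0,t\}$: they vanish except for $a_{t,0}=f_x(y)$ and $a_{0,t}=f_y(x)$, and Segre's lemma of tangents relates these two by $\sigma=(-1)^{t+1}$. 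The interior coefficients $a_{i,d}$ with $0<d<t$ are not directly controlled, and the task is to extract information about them from the socle hypothesis.

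For each $r=1,\ldots,p^{\lfloor\log_p t\rfloor}$, I would consider a family of degree-$(t+r)$ polynomials obtained from the $F(X,y)$, $y\in S$, by multiplying by suitable $r$-th products of linear forms through $x$ and $y$. Because $S$ contains a $(t+r)$-socle of $A$, every linear combination of these polynomials which vanishes on that socle automatically lies in $\Phi_{t+r}$ and therefore vanishes on all of $A$. Reading the resulting vanishing off on the bisecant $\ell$, and expanding in the basis $(\lambda')^{t-i}(\mu')^{i}$, produces, for each $k=1,\ldots,p^{\lfloor\log_p t\rfloor}$ and each $i=0,\ldots,t-k$, an identity
$$\sum_{d=i}^{i+k}\binom{d}{i}\binom{t-d}{t-i-k}g_d=0,$$
where the $g_d$ are coefficient polynomials of a candidate polynomial $\Phi(X)$ of degree at most $t+p^{\lfloor\log_p t\rfloor}$ built from $F$ (heuristically, a diagonal combination such as $F(X,X)$ weighted by an appropriate $(t-p^{\lfloor\log_p t\rfloor})$-th power of bisecant forms). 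This is exactly the hypothesis of Lemma~\ref{tech}, whose conclusion $g_t+\sigma g_0=0$ translates into the statement that $\Phi|_\ell$ is a pure product of powers of the two linear forms vanishing at $x$ and at $y$, i.e.\ $\Phi$ is hyperbolic on the bisecant $\ell$.

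Because the construction of $\Phi$ is uniform in the choice of bisecant through two points of $S$, the conclusion is that $\Phi$ is hyperbolic on $S$. The dichotomy in the lemma now reflects whether $\Phi$ is identically zero on $A$: if it is not, then $\Phi$ itself realises conclusion~(i); if $\Phi\in\Phi_{t+p^{\lfloor\log_p t\rfloor}}$, then running the same construction starting from a second, linearly independent auxiliary polynomial (for instance obtained by swapping the roles of $x$ and $y$, or by choosing a different family of degree-$(t+r)$ auxiliaries) produces another element of $\Phi_{t+p^{\lfloor\log_p t\rfloor}}$, and the two have sufficiently different bisecant data to be shown coprime, giving conclusion~(ii). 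The case where there is no curve of degree $t$ containing $A$ (so $\Phi_t=\{0\}$ and $S=A$) is covered by the exact equality $F(X,y)=f_y(X)$ on all of $A$.

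The main obstacle is arranging the correspondence between the socle-derived vanishing conditions and the precise combinatorial identities required by Lemma~\ref{tech}, so that the binomial factors $\binom{d}{i}\binom{t-d}{t-i-k}$ appear exactly as stated. This requires a careful choice of the auxiliary degree-$(t+r)$ polynomials and of the candidate $\Phi$, together with delicate bookkeeping of degrees, of the Hasse-derivative expansions of $F$ in each argument (which behave subtly in characteristic $p$ at the range $k\leqslant p^{\lfloor\log_p t\rfloor}$), and of the separation between the bisecant parameters and the ambient variables. Once this alignment is in place, Lemma~\ref{tech} collapses the whole bundle of conditions into the single relation $g_t+\sigma g_0=0$ that certifies hyperbolicity.
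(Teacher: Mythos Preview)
Your proposal has the right overall shape --- restrict to a bisecant, feed suitably arranged coefficients into Lemma~\ref{tech}, and extract hyperbolicity via Segre's relation --- but it is missing the central construction, and the dichotomy you describe at the end does not work as stated.

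The polynomial you call $\Phi$ is never actually specified, and your heuristic ``a diagonal combination such as $F(X,X)$ weighted by an appropriate power of bisecant forms'' cannot give what you need: $F(X,X)$ has degree $2t$, there is no canonical bisecant form to divide by, and nothing forces such a $\Phi$ to vanish on $A$. In the paper's proof the hyperbolic polynomial is not a single explicit construction from $F$ at all. One writes $F(X+Y,Y)-F(X,Y)$, extracts the coefficient $\rho_w(Y)$ of each monomial $X^w$ with $t-p^{\lfloor\log_p t\rfloor}\leqslant |w|\leqslant t-1$, and observes that each $\rho_w$ is a homogeneous polynomial of degree $2t-|w|\leqslant t+p^{\lfloor\log_p t\rfloor}$ vanishing on $A$ (this uses $F(X,y)=f_y(X)=f_y(X+y)=F(X+y,y)$ for $y\in S$, together with the socle hypothesis to pass from $S$ to $A$). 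The candidate $\phi$ is then the \emph{greatest common divisor} of this whole family together with $\Phi_t$.

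Once you have $\phi$ as a GCD, the dichotomy is immediate and very different from yours: if $\deg\phi=0$ then the family already contains two coprime elements, each of degree at most $t+p^{\lfloor\log_p t\rfloor}$ and each vanishing on $A$, giving~(ii); if $\deg\phi>0$ then the bisecant calculation (changing basis so that $x=(1,0,0)$, $y=(0,1,0)$, and the bisecant is $Y_3=0$) feeds $g_d(Y)=Y_1^dY_2^{t-d}b_{d,t-d,0}(Y)$ modulo $(Y_3,\phi)$ into Lemma~\ref{tech}, whose output together with Theorem~\ref{main} yields $2Y_1^tY_2^t f_x(y)\equiv 0\pmod{Y_3,\phi}$. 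Since $q$ is odd and $f_x(y)\neq 0$, this forces $\phi$ modulo $Y_3$ to be a monomial in $Y_1,Y_2$, which is precisely hyperbolicity on the bisecant through $x$ and $y$, giving~(i). Your plan to ``run the same construction with a second, linearly independent auxiliary polynomial'' to manufacture a coprime pair is neither needed nor, as stated, sufficient: two hyperbolic polynomials on $S$ can easily share a component.
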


\begin{proof}

For each $v=(i,j,k) \in \{0,\ldots,t-1\}^3$ where $i+j+k \leqslant t-1$, define $\rho_{v}(Y)$
to be the coefficient of $X_1^iX_2^jX_3^k$ in 
$$
F(X+Y,Y)-F(X,Y).
$$
Observe that the degree of $\rho_{v}(Y)$ is $2t-i-j-k$.

Since 
$$
F(X,y)=f_y(X)=f_y(X+y)=F(X+y,y)
$$
for all $y \in S$, we have that $\rho_{w}(y)=0$ for all $w \in W$ and $y \in S$, where
$$
W=\{(i,j,k) \in \{0,\ldots,t-1\}^3 \ | \ t-p^{\lfloor \log_p t \rfloor} \leqslant i+j+k \leqslant t-1\}.
$$
This implies that $\rho_{w}(y)=0$ for all $w \in W$ and $y \in A$, since in the case that $S \neq A$, the set $S$ contains a $(t+j)$-socle, for all $j=0,\ldots,p^{\lfloor \log_p t \rfloor}$, which implies $\rho_{w}(y)=0$ for all $y \in A$ too.

Let $\phi(Y)$ be the greatest common divisor of
$$
\langle \{ \rho_w(Y) \ | \ w \in W\}  \cup \Phi_t[Y] \rangle,
$$
where $\Phi_t[Y]$ denotes the subspace of homogeneous polynomials of degree $t$ in ${\mathbb{F}}_q[Y_1,Y_2,Y_3]$ which are zero on $A$.
Observe that the degree of $\phi$ is at most $t+p^{\lfloor \log_p t \rfloor}$. In the case that $A$ is not contained in a curve of degree $t$, we do not yet discount the case that this subspace is $\{0\}$. In this case, which we shall rule out, $\phi$ is the zero polynomial.

Let $x$ and $y$ be arbitrary points of $S$. Let $B$ be a basis, with respect to which, $x=(1,0,0)$ and $y=(0,1,0)$. Let $f_a^*(X)$ be the polynomial we obtain from $f_a(X)$ when we change the basis from the canonical basis to $B$, and likewise let $F^*(X,Y)$ be the polynomial we obtain from $F(X,Y)$, and let $\phi^*$ be the polynomial we get from $\phi$.

Define homogeneous polynomials $b_{d_1d_2d_3}(Y)$ of degree $t$ by 
writing
 $$
F^*(X,Y)=\sum_{d_1+d_2+d_3=  t} b_{d_1d_2d_3}(Y) X_1^{d_1}X_2^{d_2} X_3^{d_3}.
 $$
Then
 $$
F^*(X+Y,Y)=\sum_{d_1+d_2+d_3=  t} b_{d_1d_2d_3}(Y) (X_1+Y_1)^{d_1}(X_2+Y_2)^{d_2} (X_3+Y_3)^{d_3},
 $$
 $$
 =\sum_{d_1+d_2+d_3=  t} b_{d_1d_2d_3}(Y) {d_1 \choose i_1}  {d_2 \choose i_2} {d_3 \choose i_3}X_1^{i_1} Y_1^{d_1-i_1}X_2^{i_2} Y_2^{d_2-i_2}X_3^{i_3} Y_3^{d_3-i_3}.
 $$
Let $r_{ijk}(Y)$ be the coefficient of $X_1^{i}X_2^jX_3^k$ in 
$$
F^*(X+Y,Y)-F^*(X,Y).$$
Then $r_{ijk}(Y)$ is in
$$
\langle \rho^*_w(Y) \ | \ w_1+w_2+w_3=i+j+k\rangle,$$
where $\rho^*_w(Y)$ is the polynomial we obtain from $\rho_w(Y)$, when we change the basis from the canonical basis to $B$. 

Since
$\phi$ is the greatest common divisor of
$$
\langle \{\rho_w(Y) \ | \ w \in W \}\cup \Phi_t[Y] \rangle,
$$
$\phi^*$ is a factor of all the polynomials in
$$
\langle \{r_w(Y) \ | \ w \in W \} \cup \Phi_t^*[Y] \rangle,
$$
where $\Phi_t^*[Y]$ is the subspace of homogeneous polynomials of degree $t$ which are zero on $S$, with respect to the basis $B$.

Let $w=(i,t-i-k,0)$, where $k \in \{1,\ldots,p^{\epsilon} \}$ and $i \in \{0,\ldots,t-k\}$. Then $w \in W$ and 
$$
r_w(Y)=\sum_{d_1+d_2+d_3=t} {d_1 \choose i}{d_2 \choose t-i-k} Y_1^{d_1-i} Y_2^{d_2-t+i+k} Y_3^{d_3} b_{d_1d_2d_3}(Y).
$$
The polynomial $\phi^*$ is a factor of all these polynomials, so it is a factor of $Y_1^i Y_2^{t-i-k} r_w(Y)$ and therefore,
$$
\sum_{d=i}^{i+k} {d \choose i}{t-d \choose t-i-k} Y_1^dY_2^{t-d} b_{d,t-d,0}(Y)=0 \pmod {Y_3,\phi^*},
$$
for all $k \in \{1,\ldots,p^{\epsilon} \}$ and $i \in \{0,\ldots,t-k\}$. 

By Lemma~\ref{tech}, with $g_d(Y)=Y_1^dY_2^{t-d} b_{d,t-d,0}(Y)$,
$$
Y_1^t b_{t,0,0}(Y)+(-1)^{t+1}Y_2^t b_{0,t,0}(Y)=0 \pmod {Y_3,\phi^*}.
$$
Note that if $\phi^*=0$ then $\rho_w=r_w=0$ for all $w\in W$, so the expression is also zero in this case.

By Theorem~\ref{main},
$$
F(Y,x)=f_{x}(Y).
$$
With respect to the basis $B$ this gives,
$$
F^*(Y,x)=f^*_x(Y).
$$
Since $f_{x}^*(Y)$ is a polynomial in $Y_2$ and $Y_3$,
$$
f_{x}^*(Y)=f_{x}^*(y)Y_2^t \pmod{ Y_3}.
$$

Again, by Theorem~\ref{main}, the polynomial
$$
F(Y,x)-(-1)^{t+1}F(x,Y)
$$
is a polynomial of degree $t$ which is zero on $A$. 

Therefore, it is a polynomial of $\Phi_t[Y]$ and so
$$
F(Y,x)-(-1)^{t+1}F(x,Y)=0 \pmod{\phi}.
$$
Hence,
$$
F^*(Y,x)=(-1)^{t+1}F^*(x,Y) \pmod{\phi^*}.
$$

This implies that 
$$
b_{t,0,0}(Y)=F^*(x,Y)=(-1)^{t+1}F^*(Y,x)=(-1)^{t+1}f_{x}^*(y)Y_2^t \pmod{\phi^*, Y_3}.
$$
Similarly
$$
b_{0,t,0}(Y)=(-1)^{t+1}f_{y}^*(x)Y_1^t \pmod{\phi^*,Y_3}.
$$
Hence, we have that 
$$
Y_1^tY_2^t(f_{x}^*(y)+(-1)^{t+1}f_{y}^*(x))=0\pmod {\phi^*,Y_3}.
$$
By Lemma~\ref{segre} and the fact that $f_a$ and $f_a^*$ define the same functions, this implies
$$
2Y_1^tY_2^t f_{x}(y)=0 \pmod {\phi^*,Y_3}.
$$
By hypothesis $q$ is odd, so the left-hand side is non-zero. Hence, this equation rules out the possibility that $\phi^*$ (and hence $\phi$) is zero and so we have proved that there is a curve of degree at most $t+p^{\lfloor \log_p t \rfloor}$ which contains $A$.

If the degree of $\phi$ is zero then there must be at least two co-prime polynomials of degree at most $t+p^{\lfloor \log_p t \rfloor}$ both of which are zero on $A$ and we have conclusion (i).

If the degree of $\phi$ is not zero then the above equation implies that 
$$
\phi^*(Y)=cY_1^{i}Y_2^j  \pmod {Y_3},
$$
for some integers $i,j$ such that $i+j=\deg \phi^*=\deg \phi$ and some $c \in {\mathbb F}_q$.
With respect to the canonical basis this gives
$$
\phi(Y)=\alpha(Y)^i \beta(Y)^j  \pmod {\gamma (Y)},
$$
where the kernel of the linear form $\gamma$ is the line joining $x$ and $y$ and $\alpha$ and $\beta$ are linear forms with the property that $\alpha(y)=0$ and $\beta(x)=0$.
Thus, we have proved that if the kernel of a linear form $\gamma$ is a bisecant to $S$ then $\phi$ modulo $\gamma$ factorises into at most two linear factors whose multiplicities sum to the degree of $\phi$.

Therefore, $\phi$ is hyperbolic on $S$ and we have conclusion (ii).
\end{proof}

\section{Proof of Theorem~\ref{twocurves}.}

\begin{lemma} \label{itisitis}
If there is a homogeneous polynomial $\phi$ which is hyperbolic on an arc $S$, where $|S| \geqslant 2\deg \phi+2$, then all but at most one point of $S$ are contained in a conic and if $q$ is odd then $S$ is contained in a conic.
\end{lemma}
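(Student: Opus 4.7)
The strategy is strong induction on $d = \deg\phi$, reducing to the case that $\phi$ is irreducible. Hyperbolicity on each bisecant $\ell = xy$ of $S$ gives $\phi|_\ell = \alpha^i\beta^j$ with $\alpha(y)=\beta(x)=0$, $i,j\geq 1$, $i+j=d$, which in particular places $S\subseteq V(\phi)$. Unique factorisation in the univariate polynomial ring transfers the hyperbolic property to every factor of $\phi$. A linear factor would require every bisecant of $S$ to contain the unique point of $S$ lying on the corresponding line, impossible when $|S|\geq 6$. So any non-trivial factorisation $\phi=\phi_1\phi_2$ has $\deg\phi_k\geq 2$ for each $k$, and since $|S|\geq 2d+2>2\deg\phi_k+2$ the inductive hypothesis produces a conic through all but at most one point of $S$ for each factor; two distinct conics share at most $4<|S|-2$ points, so these conics coincide. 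The stronger $q$-odd conclusion propagates through the same induction.

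The base case $d=2$ is immediate: an irreducible $\phi$ is a non-degenerate conic containing $S$, which is the required conic ($d=1$ is excluded since $|S\cap V(\phi)|\leq 2<|S|$). The content is therefore to rule out the case $d\geq 3$ with $\phi$ irreducible. Writing $\nu_x$ for the multiplicity of the irreducible plane curve $V(\phi)$ at $x\in S$, hyperbolicity yields two structural constraints: first, $\nu_x+\nu_y\leq d$ for every pair $x,y\in S$ (from $m(x,\ell)+m(y,\ell)=d$ and $m\geq \nu$); and second, at most $\nu_x$ of the bisecants through $x$ lie in the degree-$\nu_x$ tangent cone to $V(\phi)$ at $x$. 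These combine with the classical genus bound $\sum_{x\in S}\binom{\nu_x}{2}\leq\binom{d-1}{2}$ for an irreducible plane curve.

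The main obstacle is merging these three constraints into a clean contradiction for every $d\geq 3$. The idea is a case-split on the existence of a smooth point of $V(\phi)$ in $S$. If some $x\in S$ is smooth (i.e.\ $\nu_x=1$), then for each of the at least $|S|-2$ bisecants $xy$ different from the tangent $\tau_x$ we have $m(y,xy)=d-1$, which for $d\geq 4$ forces $xy$ into the tangent cone at $y$; each $y$ has only $\nu_y$ such tangent directions, each containing at most one further point of $S$ by the arc property, so summing these incidences across the smooth points of $S$ yields an upper bound by $\sum_y \nu_y$, which combined with the genus bound contradicts $|S|\geq 2d+2$ (the case $d=3$ is handled separately, since then a pair of smooth points both satisfy $m=2$, giving $\binom{|S|}{2}\leq |S|$ directly). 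If no point of $S$ is smooth on $V(\phi)$, then $\nu_x\geq 2$ for all $x\in S$, so the genus bound forces $|S|\leq\binom{d-1}{2}$, and together with $\nu_x+\nu_y\leq d$ and the tangent-cone count this contradicts $|S|\geq 2d+2$.
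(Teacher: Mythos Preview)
Your approach via induction on $\deg\phi$ and the genus formula is genuinely different from the paper's. The paper gives a direct elementary argument: after observing that at most one point of $S$ lies off $V(\phi)$, it chooses coordinates with three points of $S$ as reference triangle, writes $\phi=\sum_{j}X_1^{n-j}c_j(X_2,X_3)$ where $n=\deg_{X_1}\phi$, and uses hyperbolicity on the bisecants $X_3=eX_2$ through $(1,0,0)$ to force the identities $n^jc_0^{\,j-1}c_j=\binom{n}{j}c_1^{\,j}$, whence $\phi=c_0(X_2,X_3)(X_1+aX_2+bX_3)^n$, contradicting $|S|\geq 2d+2$ unless $n=1$ and hence $d=2$. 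No multiplicities, no genus bound, no induction.

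Your reduction to irreducible $\phi$ is essentially sound (modulo a slip: the definition of hyperbolic allows one of $i,j$ to be $0$, so $S\subseteq V(\phi)$ may fail at one point---the paper handles this in its first paragraph). But the irreducible case for $d\geq 3$ has real gaps. First, the bound $\sum_{x\in S}\binom{\nu_x}{2}\leq\binom{d-1}{2}$ needs $\phi$ to be \emph{absolutely} irreducible; over $\mathbb{F}_q$ an irreducible $\phi$ can split into Galois-conjugate components over the closure (a norm form, for instance), and then the inequality is false in general. Second, neither branch of your dichotomy is actually closed. In Case~B, from $\nu_x\geq 2$ everywhere you obtain only $|S|\leq\binom{d-1}{2}$, which already exceeds $2d+2$ once $d\geq 8$; you assert that $\nu_x+\nu_y\leq d$ and the tangent-cone count finish it, but supply no argument, and bounding the ``tight'' pairs with $\nu_x+\nu_y=d$ is not automatic. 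In Case~A, the step ``$m(y,xy)=d-1$ forces $xy$ into the tangent cone at $y$'' needs $\nu_y\leq d-2$, which you do not verify; and your double count yields only $s(|S|-2)\leq\sum_y\nu_y$ with $s$ the number of smooth points---when $s$ is small this is vacuous, since the genus bound controls $\sum\nu_y(\nu_y-1)$ rather than $\sum\nu_y$. You have not shown how Case~A concludes when only few points are smooth, nor how Case~B concludes for large $d$. As written, the $d\geq 3$ step is a plausible outline rather than a proof.
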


\begin{proof} 
Let $r$ be the degree of $\phi$. 

Suppose there is a point $x$ in $S$ which is not in the zero set of $\phi$. If $\gamma$ is a linear form such that the kernel of $\gamma$ is a bisecant joining $x$ to a point $y$ of $S$ then, since $\phi$ is hyperbolic on $S$, $\phi(X)=\alpha(X)^r$ modulo $\gamma$ for some linear form $\alpha$, where $\alpha(y)=0$. This implies that $\phi(y)=0$, so $\phi$ is zero on all but at most one point of $S$.

Observe that $r \geqslant 2$, since $S$ is not contained in a line. Also, we can assume that $\phi$ is not a $p$-th power, since we can replace $\phi$ by its $p$-th root and all the properties are preserved.

Choose a suitable basis so that $(1,0,0)$, $(0,1,0)$ and $(0,0,1)$ are points of $S \setminus \{ x\}$. 

Suppose every term of $\phi(X)$ is of the form $c_{ijk}X_1^iX_2^{jp}X_3^{kp}$. Since $\phi(X)$ is hyperbolic on $S$, $\phi(0,X_2,X_3)=c_{0jk}X_2^{jp}X_3^{kp}$, for some $j$ and $k$. Hence $r=(j+k)p$. Considering any term with $i>0$, it follows that $p$ divides $i$. So $\phi$ is a $p$-th power, which it is not. Hence we can assume, without loss of generality, that some exponent of $X_1$ is not a multiple of $p$, some exponent of $X_3$ is not a multiple of $p$ and that the degree of $\phi$ in $X_3$ is at most the degree of $\phi$ in $X_1$.

Let $n$ be the degree of $\phi$ in $X_1$. 
Suppose $n \geqslant 2$. 

Then $2 \leqslant n \leqslant r-1$.

Write 
\begin{equation} \label{hypeqn1}
\phi(X)=\sum_{j=0}^n X_1^{n-j} c_j(X_2,X_3),
\end{equation} 
where $c_j$ is either zero or a homogeneous polynomial of degree $r-n+j$ and by assumption $c_0(X_2,X_3) \neq 0$. 

Let 
$$
E=\{ e \in {\mathbb F}_q \ | \ X_3-eX_2 \ \mathrm{is\ a\ bisecant\ to}\ S \ \mathrm{and}\ c_0(X_2,eX_2) \neq 0 \}.
$$
Since there are $|S|-2$ bisecants to $S$ of the form $X_3-eX_2=0$ and $c_0(X_2,eX_3)=0$ for at most $r-n$ values of $e \in {\mathbb F}_q$, we have that 
$$
|E| \geqslant |S|-2-r+n \geqslant r+n.
$$
Since $\phi(X)$ is hyperbolic on $S$, for all $e \in E$, there exists a $d$ such that the point $(-d,1,e) \in S$ and $\phi(X)$ modulo $X_3-eX_2$ factorises into the product of two linear factors. One of these factors is $X_1+dX_2$, while the other does not contin an $X_1$ term, since it corresponds to the point $(1,0,0)$. Hence,
\begin{equation} \label{hypeqn2}
\phi(X_1,X_2,eX_2)=(X_1+dX_2)^n c_0(X_2,eX_2).
\end{equation} 

Comparing the coefficient of $X_1^{n-j}$ in (\ref{hypeqn1}) and (\ref{hypeqn2}) we have that for $j=1,\ldots,n$,
$$
c_j(X_2,eX_2)={n \choose j} d^j X_2^j c_0(X_2,eX_2).
$$

If $p$ divides $n$ and $j$ is not a multiple of $p$ then $c_j(X_2,eX_2)=0$ for all $e \in E$. Since the degree of $c_j$ is at most $r$ and $r \leqslant |E|-1$, $c_j(X_2,X_3)=0$. But then this implies that each exponent of $X_1$ in a term of $\phi(X)$ is a multiple of $p$, a contradiction. Therefore, $n$ is not a multiple of $p$.

For $e \in E$ and $j=1$ we have that 
$$
c_1(X_2,eX_2)=ndX_2c_0(X_2,eX_2).
$$
Thus, for $j=1,\ldots,n$, substituting for $d$ we obtain
$$
c_0(X_2,eX_2)^{j-1}c_j(X_2,eX_2)n^j={n \choose j} c_1(X_2,eX_2)^j.
$$
Hence, $h_j(e)=0$, for all $e \in E$, where $h_j(Y)$ is a polynomial in $({\mathbb F}_q[X_2])[Y]$ defined by
$$
h_j(Y)=c_0(X_2,YX_2)^{j-1}c_j(X_2,YX_2)n^j-{n \choose j} c_1(X_2,YX_2)^j.
$$

Let $m$ be the degree of the polynomial $h_2(Y)$. Then $m \leqslant 2(r-n+1)$ since the degree of $c_j$ is $r-n+j$, and $m\leqslant 2n$ since the degree of $c_j(X_2,X_3)$ in $X_3$ is at most $n$. 

To be able to conclude that $h_2$ is identically zero, we need $m \leqslant |E|-1$.

If $m \geqslant |E|$ then 
$$
2(r-n+1) \geqslant m \geqslant |E| \geqslant  r+n,
$$ 
so $r-n+2 \geqslant 2n \geqslant m$, contradicting the assumption that $m \geqslant |E|$.

Therefore, $h_2(Y)$ is identically zero. This implies that the polynomial $c_0(X_2,YX_2)$ divides $c_1(X_2,YX_2)$ and so
$$
c_1(X_2,YX_2)=(aX_2+bYX_2)c_0(X_2,YX_2),
$$ 
for some $a,b \in {\mathbb F}_q$.
Hence,
$$
h_j(Y)=c_0(X_2,YX_2)^{j-1}(c_j(X_2,YX_2)n^j-{n \choose j} c_0(X_2,YX_2)(aX_2+bYX_2)^{j}).
$$

But for each $e \in E$, the polynomial 
$$c_j(X_2,YX_2)n^j-{n \choose j} c_0(X_2,YX_2)(aX_2+bYX_2)^{j},$$ 
is zero at $Y=e$. It has degree at most $r-n+j\leqslant r \leqslant |E|-1$ in $Y$, so we conclude that it is identically zero.

Substituting $Y=X_3/X_2$, we have that
$$
c_j(X_2,X_3)n^j={n \choose j} c_0(X_2,X_3)(aX_2+bX_3)^{j},
$$
for $j=1,\ldots,n$.

Hence,
$$
\phi(X)=\sum_{j=0}^n {n \choose j} X_1^{n-j}  c_0(X_2,X_3)(\frac{a}{n}X_2+\frac{b}{n}X_3)^{j},
$$
and therefore
$$
\phi(X)=c_0(X_2,X_3)(X_1+\frac{a}{n}X_2+\frac{b}{n}X_3)^n.
$$
Since $\phi$ is zero at the points of $S \setminus \{ x \}$, this implies that all but $r-n$ points of $S \setminus \{ x \}$ are contained in a line, which gives $|S|-1 \leqslant r-n+2 \leqslant r$, a contradiction.

Hence, $n=1$. Since the degree of $\phi$ in $X_3$ is at most the degree of $\phi$ in $X_1$, this implies that the degree of $\phi$ in $X_3$ is one. Since $\phi$ is hyperbolic on $S$, $\phi(X_1,0,X_3)$ is a constant times $X_1X_3$. Therefore, $r=2$ and we have proved that $\phi$ is a quadratic form and that $S \setminus \{ x\}$ is contained in a conic.

Now, let $\gamma$ be a linear form whose kernel is a bisecant to $S$ containing $x$. Then, as in the first paragraph this bisecant to $S$ is a tangent to $\phi$. Since a point not in a conic is on at most two tangents to a conic, when $q$ is odd, and $|S \setminus \{ x \}| \geqslant 3$ by assumption, we conclude that no such point $x$ exists and that $S$ is contained in a conic if $q$ is odd.
\end{proof}

We now prove Theorem~\ref{twocurves}, which we restate here for convenience.

{\bf Theorem \ref{twocurves}.}
{\em
Let $A$ be a planar arc of size $q+2-t$, $q$ odd, not contained in a conic. 

If $A$ is not contained in a curve of degree $t$ then it is contained in the intersection of two curves of degree at most $t+p^{\lfloor \log_p t \rfloor}$ which do not share a common component. 

If $A$ is contained in a curve $\phi$ of degree $t$ and
\begin{equation}
p^{\lfloor \log_p t \rfloor}(t+\tfrac{1}{2}p^{\lfloor \log_p t \rfloor}+\tfrac{3}{2}) \leqslant \tfrac{1}{2}(t+2)(t+1)
\end{equation}
then there is another curve of degree at most $t+p^{\lfloor \log_p t \rfloor}$ which contains $A$ and shares no common component with $\phi$.
}

\begin{proof} (of Theorem~\ref{twocurves}.)
Suppose that $A$ is contained in a curve of degree $t$ and that
$$
p^{\lfloor \log_p t \rfloor}(t+\tfrac{1}{2}p^{\lfloor \log_p t \rfloor}+\tfrac{3}{2}) \leqslant \tfrac{1}{2}(t+2)(t+1).
$$
By Lemma~\ref{tplusrsocle}, there is a subset $S''$ of $A$ which contains a $(t+j)$-socle $S_j$, for all $j=0,\ldots, p^{\lfloor \log_p t \rfloor}$. Moreover,
$$
|S'' \setminus S_0| \leqslant p^{\lfloor \log_p t \rfloor}(t+\tfrac{1}{2}p^{\lfloor \log_p t \rfloor}+\tfrac{3}{2}) \leqslant \tfrac{1}{2}(t+2)(t+1).
$$
We can extend $S''$ to a subset $S$ of $A$ such that
$$
|S \setminus S_0| = \min \{  \tfrac{1}{2}(t+2)(t+1), |A \setminus S_0| \}.
$$
Suppose there are not two co-prime homogeneous polynomials of degree at most $t+p^{\lfloor \log_p t \rfloor}$ which are zero on $A$. Then, by Theorem~\ref{main} and Lemma~\ref{almost}, there is a homogeneous polynomial $\phi$ of degree at most $t+p^{\lfloor \log_p t \rfloor}$ which is hyperbolic on $S$.

Suppose $|S\setminus S_0|=|A\setminus S_0|$ then $S=A$. If $|S| \leqslant 2\deg \phi+1$ then we can trivially cover $A$ with one conic and bisecants in two ways so as to give two co-prime homogeneous polynomials of degree at most $\deg \phi$ which are zero on $A$, contrary to the supposition. Therefore, $|S| \geqslant 2\deg \phi+2$.

Suppose $|S\setminus S_0|=\frac{1}{2}(t+2)(t+1)$.

If $p^{\lfloor \log_p t \rfloor} \geqslant p$ then
$$
|S| \geqslant \tfrac{1}{2}(t+2)(t+1) \geqslant p^{\lfloor \log_p t \rfloor}(t+\tfrac{1}{2}p^{\lfloor \log_p t \rfloor}+\tfrac{3}{2}) \geqslant 2(t+p^{\lfloor \log_p t \rfloor})+2\geqslant 2 \deg \phi+2.
$$ 
And if $p^{\lfloor \log_p t \rfloor}=1$ then
$$
|S| \geqslant \tfrac{1}{2}(t+2)(t+1)+t \geqslant  2(t+1)+2\geqslant 2 \deg \phi+2.
$$ 
Thus, in all cases, $|S| \geqslant 2\deg \phi+2$, so by Lemma~\ref{itisitis}, $S$ is contained in a conic $C$. 

If there is a point $x$ of $A$ which is not in $C$. We can re-define $S$ to contain $x$ and five points of $A \cap C$ and get a contradiction. Therefore, $A$ is contained in the conic $C$ which, by hypothesis, it is not.

Therefore, there are two co-prime homogeneous polynomials of degree at most $t+p^{\lfloor \log_p t \rfloor}$ which are zero on $A$. By Lemma~\ref{threepolys}, $A$ is contained in a curve of degree $t$ and a curve of degree at most $t+p^{\lfloor \log_p t \rfloor}$ which do not share a common component.

Suppose $A$ is not contained in a curve of degree $t$. Then
$$
|A| \geqslant {t+2 \choose 2},
$$
which implies
$$
|A| \geqslant 2(t+p^{\lfloor \log_p t \rfloor})+2
$$
unless $t \leqslant 4$, $p=3$ and $q \geqslant 27$. We can assume $t \geqslant 3$, since Segre \cite{Segre1967} proved that an arc of size $q$, $q$ odd, is contained in a conic.

However, for $q \geqslant 27$ and $3 \leqslant t \leqslant 4$,
$$
|A|=q+2-t  \geqslant 2(t+p^{\lfloor \log_p t \rfloor})+2.
$$
Therefore, we may assume,
$$
|A| \geqslant 2(t+p^{\lfloor \log_p t \rfloor})+2.
$$

Suppose there are not two co-prime homogeneous polynomials of degree at most $t+p^{\lfloor \log_p t \rfloor}$ which are zero on $A$. Then, by Lemma~\ref{almost}, there is a homogeneous polynomial $\phi$ of degree at most $t+p^{\lfloor \log_p t \rfloor}$ which is hyperbolic on $A$. By Lemma~\ref{itisitis}, $A$ is contained in a conic, contradicting the assumption that it is not contained in a curve of degree $t$.
\end{proof}
 
\section{Planar arcs in planes of order less than $32$ of odd characteristic.}

The entries in the following array are the number of complete arcs in planes of odd order less than $32$, for which $|A|=q+2-t \geqslant 2d+1$, where $d=t+ p^{\lfloor \log_p t \rfloor}$. Again, we have used \cite[Table 9.4]{Hirschfeld1998} and the articles \cite{Coolsaet2015}, \cite{CS2009} and \cite{CS2011} to compile this data. Below these values the trivial construction in the proof of Theorem~\ref{twocurves} suffices to prove that $A$ is contained in the intersection of two curves of degree at most $d$, sharing no common component.

$$
\begin{array}{c|cccccccccccc}
\hline
 q & 5 & 7 & 9 & 11 & 13 & 17 & 19 & 23 & 25 & 27 & 29 & 31 \\
|A| \geqslant & 6 & 7 & 10 & 10 & 11 & 14 & 15 & 18 & 23 & 24 & 22 & 23   \\
\# \ \mathrm{not\ contained\ in\ a\ conic} & 0 & 0 & 0 & 1 & 1 & 1 & 0 & 0 & 0 & 0 & 1 & 0\\
\hline
\end{array}
$$

We consider each of the four complete arcs of size at least $2d+1$ in planes of odd characteristic of order at most $31$. In each case, we construct two curves of degree $d$, sharing no common component, containing the arc, whose existence follows from Theorem~\ref{twocurves}. 

The unique arc $A$ of size $10$ in $\mathrm{PG}(2,11)$ is Example~\ref{10arc}. Since any five points lie on a conic, we can cover the points of $S$ by two conics $C_1$ and $C_1'$. By choosing four points on $C_1$ and one point on $C_1'$, we can cover at least these five points of $A$ by another conic $C_2$ and therefore all the points of $A$ by two conics $C_2$ and $C_2'$. This gives us two quartic curves which share no common component, both of which contain $A$. 

The unique arc of size $12$ in $\mathrm{PG}(2,13)$ is Example~\ref{12arc}. As we already saw in Example~\ref{12arc} it is contained in the intersection of the two quartic curves given by the equations $x_2^4=x_1^2x_3^2$ and $x_1^4=x_3^2x_2^2$.

The unique arc $A$ of size $14$ in $\mathrm{PG}(2,17)$ has $10$ points on a conic $C$. Hence, we get one curve of degree four covering $A$ by choosing two lines covering the points of $A \setminus C$. Now, let $C_1$ be a conic through four points of $A \cap C$ and one point of $A \setminus C$ and $C_2$ be a conic through four other points of $S \cap C$ and one other point of $A \setminus C$. We can cover the points of $A \setminus (C_1 \cup C_2)$ with two lines and obtain a curve of degree $6$ which contains $A$ and does not share a common component with the curve of degree $4$ containing $A$.

The unique arc of size $24$ in $\mathrm{PG}(2,29)$ is Example~\ref{24arc}. It is exactly the zero set of the polynomial 
$$
g(x)=x_1^3x_2+x_2^3x_3+x_3^3x_1.
$$
As observed before it is also contained in a curve of degree $7$ which shares no component with the curve defined by $g$.

{\small  Simeon Ball}\\
 {\small  Departament de Matem\`atiques,} \\
{\small Universitat Polit\`ecnica de Catalunya,} \\
{\small M\`odul C3, Campus Nord,}\\
{\small c/ Jordi Girona 1-3,}\\
{\small 08034 Barcelona, Spain} \\
 {\tt simeon@ma4.upc.edu} \\

{\small   Michel Lavrauw}\\
%{\small FENS 1015,} \\
{\small Faculty of Engineering and Natural Sciences,}\\
{\small Sabanc\i \  University,}\\
{\small Istanbul, Turkey}\\
  {\tt mlavrauw@sabanciuniv.edu}}

%{\small Universit\`a degli Studi di Padova,} \\
%{\small Dipartimento di Tecnica e Gestione dei Sistemi Industriali,}\\
%{\small Stradella S. Nicola, 3, I-36100 Vicenza, Italy} \\

%\medskip

\end{document}